\title{On toric foliated pairs}
\author{Osamu Fujino and Hiroshi Sato} 
\date{2025/3/11, version 0.13}
\address{Department of 
Mathematics, Graduate School of Science, 
Kyoto University, Kyoto 606-8502, Japan}
\email{fujino@math.kyoto-u.ac.jp}
\address{Department of Applied Mathematics, 
Faculty of Sciences, Fukuoka University, 
8-19-1, Nanakuma, Jonan-ku, Fukuoka 814-0180, Japan}
\email{hirosato@fukuoka-u.ac.jp}
\keywords{toric foliations, toric 
foliated pairs, lengths of extremal rational curves, 
cone theorem,  
Fujita's freeness, Fujita's very ampleness, 
Kodaira vanishing theorem}
\subjclass[2020]{Primary 14M25; Secondary 14E30, 32S65}
\newcommand{\Supp}[0]{{\operatorname{Supp}}}
\newcommand{\rank}[0]{{\operatorname{rank}}}
\newcommand{\mult}[0]{{\operatorname{mult}}}
\newcommand{\codim}[0]{{\operatorname{codim}}}
\newcommand{\Exc}[0]{{\operatorname{Exc}}}
\newcommand{\NE}[0]{{\operatorname{NE}}}
\newtheorem{thm}{Theorem}[section]
\newtheorem*{claim}{Claim}
\newtheorem{cor}[thm]{Corollary}
\newtheorem{lem}[thm]{Lemma}
\theoremstyle{definition}
\newtheorem{defn}[thm]{Definition}
\newtheorem*{ack}{Acknowledgments} 
\newtheorem{rem}[thm]{Remark}
\newtheorem{step}{Step}
\newtheorem{ex}[thm]{Example}
\begin{document}

\maketitle 

\begin{abstract}
We discuss lengths of extremal rational 
curves, Fujita's freeness, 
and the Kodaira vanishing theorem for log canonical 
toric foliated pairs. 
\end{abstract}

\tableofcontents 

\section{Introduction}\label{a-sec1} 
The basics of toric foliations and toric foliated minimal model program 
were already studied in \cite{pang}, 
\cite[Section 10]{spicer}, \cite{wang}, \cite{chang-chen}, and so on. 
In \cite{fujino-sato2}, we discussed lengths of extremal rational curves 
for toric foliations on projective $\mathbb Q$-factorial toric 
varieties. 
This paper is a continuation of \cite{fujino-sato2} and 
is obviously a generalization of \cite{fujino}. 
Throughout this paper, 
we will work over $\mathbb C$, the field of complex 
numbers. 
The following theorem is a log canonical generalization of 
\cite[Theorem 1.3]{fujino-sato2} or is 
a generalization of \cite[Theorem 0.1]{fujino} for 
toric foliated pairs. 
Note that our approach in this 
paper is based on the toric Mori 
theory (see \cite{reid}, \cite[Chapter 14]{matsuki}, 
\cite{fujino}, and \cite{fujino-sato}). 

\begin{thm}[Lengths of extremal rational curves 
for toric foliated pairs]\label{a-thm1.1}
Let $X$ be a projective 
{\em{(}}not necessarily $\mathbb Q$-factorial{\em{)}} 
toric variety and let $(\mathscr F, \Delta)$ be a log canonical 
toric 
foliated pair on $X$ 
with $\rank \mathscr F=r$.  
Then 
\begin{equation}
l_{(\mathscr F, \Delta)}(R):
=\min _{[C]\in R}\{-(K_{\mathscr F}+\Delta)\cdot C\}\leq r+1
\end{equation}
holds for every extremal ray $R$ of 
the Kleiman--Mori cone 
$\overline{\NE}(X)=
\NE(X)$. Moreover, 
if $l_{(\mathscr F, \Delta)}(R)>r$ 
holds for some extremal ray $R$ of 
$\NE(X)$, then the contraction morphism 
$\varphi_R\colon X\to Y$ associated to 
$R$ is a $\mathbb P^r$-bundle over $Y$. 
In this case, $\mathscr F=\mathscr T_{X/Y}$ holds, where 
$\mathscr T_{X/Y}$ is the relative 
tangent sheaf of $\varphi_R\colon X\to Y$, and the sum of 
the coefficients of $\Delta$ is less than one. 
In particular, the foliation $\mathscr F$ is locally free. 
\end{thm}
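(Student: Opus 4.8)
The plan is to argue entirely within toric Mori theory, extending the arguments of \cite{fujino} and \cite{fujino-sato2}, by induction on $\dim X$. Recall that for a projective toric variety $X=X(\Sigma)$ one has $\overline{\NE}(X)=\NE(X)$ rational polyhedral, every extremal ray $R$ is spanned by the class of a $T$-invariant curve $V(\tau)$ attached to a wall $\tau$ of $\Sigma$, and the contraction $\varphi_R\colon X\to Y$ exists in the toric category; moreover, the class of any irreducible curve contracted by $\varphi_R$ is a nonnegative combination of classes of $T$-invariant curves in $R$, so, when $K_{\mathscr F}+\Delta$ is $R$-negative, the minimum defining $l_{(\mathscr F,\Delta)}(R)$ is attained on a $T$-invariant curve (and when $K_{\mathscr F}+\Delta$ is $R$-nef the assertion is trivial). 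Thus I would fix an ample divisor $H$ and prove, by induction on $\dim X$, the sharper statement: \emph{if $C$ is the $T$-invariant curve of minimal $H$-degree in $R$, then $-(K_{\mathscr F}+\Delta)\cdot C\le r+1$, and if $-(K_{\mathscr F}+\Delta)\cdot C>r$ then $\varphi_R\colon X\to Y$ is a $\mathbb P^r$-bundle with the stated properties.} Writing $V\subseteq N_{\mathbb R}$ for the subspace defining $\mathscr F$ (so $\dim V=r$, and $D_\rho$ is $\mathscr F$-invariant exactly when its primitive generator $v_\rho\notin V$), one has $K_{\mathscr F}=-\sum_{v_\rho\in V}D_\rho$, while log canonicity forces $\Delta=\sum_{v_\rho\in V}\delta_\rho D_\rho$ with $0\le\delta_\rho\le1$; hence
\[
-(K_{\mathscr F}+\Delta)\cdot C=\sum_{v_\rho\in V}(1-\delta_\rho)(D_\rho\cdot C)\le\sum_{v_\rho\in V,\ D_\rho\cdot C>0}(D_\rho\cdot C),
\]
and everything reduces to controlling the last sum.

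For the induction step, suppose first that $\varphi_R$ is of fiber type, and let $F$ be a general fiber: it is a complete toric variety, of Picard number one, whose fan is $\{\sigma\in\Sigma:\sigma\subseteq L_{\mathbb R}\}$ with $L_{\mathbb R}:=\ker(N_{\mathbb R}\to N_{Y,\mathbb R})$; only the $D_\rho$ with $v_\rho\in L_{\mathbb R}$ intersect $C\subseteq F$ nontrivially, and the explicit formula for $K_{\mathscr F}$ shows the restriction data form a log canonical toric foliated pair $(\mathscr F_F,\Delta_F)$ on $F$ with defining subspace $V\cap L_{\mathbb R}$, rank $r_F:=\dim(V\cap L_{\mathbb R})\le r$, and $-(K_{\mathscr F}+\Delta)\cdot C=-(K_{\mathscr F_F}+\Delta_F)\cdot_F C$. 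Decomposing $[C]_F=\sum_i a_i[C_i]$ with $C_i$ the minimal-$H|_F$-degree $T$-invariant curves of the extremal rays of $\NE(F)$ and pushing forward to $X$, the $H$-minimality of $C$ forces $\sum_i a_i\le1$, whence by the inductive hypothesis on $F$
\[
-(K_{\mathscr F_F}+\Delta_F)\cdot C=\sum_i a_i\bigl(-(K_{\mathscr F_F}+\Delta_F)\cdot C_i\bigr)\le\Bigl(\sum_i a_i\Bigr)(r_F+1)\le r+1.
\]
If $\varphi_R$ is birational, a parallel argument applies to a positive-dimensional fiber over a $T$-fixed point of $Y$ lying under the exceptional locus, after completing $\varphi_R$ equivariantly in the manner of \cite{fujino} to make the restriction legitimate; one gets $-(K_{\mathscr F}+\Delta)\cdot C\le r$ in that case. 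The only situation with $\dim F=\dim X$ is $Y$ a point, i.e.\ $X$ of Picard number one, which is the base of the induction and reduces to a direct computation with the relations among the rays of $\Sigma$.

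For the ``moreover'' part, assume $-(K_{\mathscr F}+\Delta)\cdot C>r$; then $\varphi_R$ is of fiber type, and tracing the inequalities above through the general fiber gives $-(K_{\mathscr F_F}+\Delta_F)\cdot C>r\ge r_F$. Since $F$ has Picard number one, $C$ generates its unique extremal ray, so by the inductive ``moreover'' on $F$ (applied to that ray) we get $F\cong\mathbb P^{r_F}$ with $\mathscr F_F=T_F$, hence $r_F=\dim F$; and because on $\mathbb P^{r_F}$ one has $-(K_{\mathbb P^{r_F}}+\Delta_F)\cdot\ell=(r_F+1)-\deg\Delta_F$ for a line $\ell$, the strict inequality $>r$ forces $r_F=r$. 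Therefore $\dim F=r$, $V\cap L_{\mathbb R}=V=L_{\mathbb R}$, $\deg\Delta_F<1$, and $\mathscr F$ is the foliation whose leaves are the fibers of $\varphi_R$. It remains to upgrade ``general fiber $\cong\mathbb P^r$'' to ``$\mathbb P^r$-bundle'': the rays of $\Sigma$ lying in $L_{\mathbb R}$ are exactly the $r+1$ rays $v_0,\dots,v_r$ of the $\mathbb P^r$-fan with $v_0+\dots+v_r=0$, and $-(K_{\mathscr F}+\Delta)$ is $\varphi_R$-ample, so a fan analysis shows $\varphi_R$ is equidimensional and each maximal cone of $\Sigma$ is a sum of a maximal cone of the $\mathbb P^r$-fan and a lift of a cone of $\Sigma_Y$; this identifies $\varphi_R$ with a $\mathbb P^r$-bundle, gives $\mathscr F=\mathscr T_{X/Y}$ (locally free of rank $r$, as $\varphi_R$ is then smooth), and yields $\Delta=\sum_{i=0}^r\delta_{v_i}D_{v_i}$ with $\sum_{i=0}^r\delta_{v_i}=(r+1)-\bigl(-(K_{\mathscr F}+\Delta)\cdot C\bigr)<1$.

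I expect the hard part to be this last step in the non-$\mathbb Q$-factorial setting: extracting the bundle structure and the rigidity statements ($\mathscr F=\mathscr T_{X/Y}$, equidimensionality of $\varphi_R$, the splitting of every maximal cone of $\Sigma$) purely combinatorially, since one cannot simply reduce to the $\mathbb Q$-factorial case---a small $\mathbb Q$-factorialization need not keep $\Delta$ effective---so the rigidity must be forced directly from extremality of $R$ together with the bound $-(K_{\mathscr F}+\Delta)\cdot C>r$ on the minimal curve. A secondary technical point, already implicit in \cite{fujino}, is the birational case of the first assertion, where the restriction to a non-moving fiber (and hence the induction) must be justified via an equivariant completion compatible with $\mathscr F$ and $\Delta$.
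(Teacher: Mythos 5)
There is a genuine gap, and it sits exactly at the point you yourself flag as ``the hard part'': passing from ``the general fiber of $\varphi_R$ is $\mathbb P^{r}$'' to ``$\varphi_R$ is a $\mathbb P^r$-bundle over all of $Y$'' in the non-$\mathbb Q$-factorial case. You defer this to ``a fan analysis shows $\varphi_R$ is equidimensional and each maximal cone of $\Sigma$ is a sum of a maximal cone of the $\mathbb P^r$-fan and a lift of a cone of $\Sigma_Y$,'' but no such analysis is supplied, and the authors state explicitly (Remark \ref{a-rem4.1}) that they do not know how to prove the theorem by toric geometry alone when $X$ is not $\mathbb Q$-factorial. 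Worse, your stated reason for rejecting the reduction to the $\mathbb Q$-factorial case --- that ``a small $\mathbb Q$-factorialization need not keep $\Delta$ effective'' --- is false: a small $\mathbb Q$-factorialization $\psi\colon X'\to X$ is an isomorphism in codimension one, so the strict transform $\Delta'$ of $\Delta$ is effective with the same coefficients and $K_{\mathscr F'}+\Delta'=\psi^*(K_{\mathscr F}+\Delta)$. This reduction is precisely the paper's route (Step \ref{a-step2}): one gets a $\mathbb P^r$-bundle $\varphi_{R'}\colon X'\to Y'$ upstairs by the $\mathbb Q$-factorial case, and the real work --- carried out via the non-toric Lemmas \ref{a-lem3.1}--\ref{a-lem3.3} on projective bundles over normalizations of (possibly non-rational, non-toric) curves in $Y'$ --- is to show that $\Exc(\psi)$ is a union of fibers of $\varphi_{R'}$, that the $\mathcal L_i$ descend to $Y$, and that $X$ is then isomorphic to $\mathbb P_Y(\mathcal M_0\oplus\cdots\oplus\mathcal M_r)$ because both are isomorphic in codimension one with $-(K_{X/Y}+\Delta)$ relatively ample. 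Your proposal contains no substitute for this descent argument.

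A secondary problem is the first half of your induction. The torus-invariant curve $C=V(\tau)$ of minimal $H$-degree in $R$ lies in a fiber over a torus-fixed point of $Y$, not in a general fiber $F$, so the asserted identity $-(K_{\mathscr F}+\Delta)\cdot C=-(K_{\mathscr F_F}+\Delta_F)\cdot_F C$ does not make sense for that $C$; restricting to a general fiber can only bound the minimum over the ray, not the intersection number with your chosen minimal invariant curve, and the birational case (which you handle by an unspecified ``equivariant completion'') is likewise not justified. The paper avoids all of this in the $\mathbb Q$-factorial case by the wall relation $a_{n-r+1}v_{n-r+1}+\cdots+a_{n+1}v_{n+1}=0$ and the explicit estimate through $\mult(\mu_{k,n+1})/\mult(\sigma_k)$ inherited from \cite{fujino-sato2}, rather than by restriction to fibers. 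In short, your outline reproduces the easy parts of the argument but replaces both of its genuinely delicate steps with assertions.
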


We note that we have already treated Theorem \ref{a-thm1.1} 
under the extra assumption that $X$ is 
$\mathbb Q$-factorial and $\Delta=0$ in \cite[Theorem 1.3]{fujino-sato2}. 
By Theorem \ref{a-thm1.1}, 
we have the cone theorem for log canonical 
toric foliated pairs. 

\begin{cor}[Cone theorem for toric foliated 
pairs]\label{a-cor1.2} 
Let $(\mathscr F, \Delta)$ be a log canonical toric foliated 
pair on a projective toric variety $X$ with 
$\rank \mathscr F=r$. 
Then we have 
\begin{equation}
\overline{\NE}(X)=\NE(X)=
\sum_i \mathbb R_{\geq 0}
[C_i]
\end{equation}
where $C_i$ is a torus invariant curve with $-(K_{\mathscr F}+
\Delta)\cdot C_i\leq r+1$ for every $i$. 
Let $R$ be an extremal ray of $\NE(X)$. 
Then we can choose $C_i$ such that 
$-(K_{\mathscr F}+\Delta)\cdot C_i\leq r$ with 
$R=\mathbb R_{\geq 0}[C_i]$ unless 
the associated contraction $\varphi_R\colon X\to Y$ 
is a $\mathbb P^r$-bundle 
over $Y$, 
$\mathscr F=\mathscr T_{X/Y}$, and the sum of 
the coefficients of $\Delta$ is less than one. 
\end{cor}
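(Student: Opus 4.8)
The plan is to deduce Corollary~\ref{a-cor1.2} from Theorem~\ref{a-thm1.1} together with the standard structure theory of the cone of curves of a projective toric variety, so the first thing I would do is recall, from the toric Mori theory (\cite{reid}, \cite[Chapter~14]{matsuki}, \cite{fujino}, \cite{fujino-sato}), that for a projective toric variety $X$ with fan $\Sigma$ the Kleiman--Mori cone $\overline{\NE}(X)$ is a rational polyhedral cone, that it coincides with $\NE(X)$, and that it is generated by the classes of the torus invariant curves (those attached to the codimension one cones, i.e.\ the walls, of $\Sigma$). Consequently $\NE(X)$ is the sum of its finitely many extremal rays, and each extremal ray $R$ is generated by the class of at least one torus invariant curve; moreover, again by the toric Mori theory, the minimum defining $l_{(\mathscr F,\Delta)}(R)$ is attained at a torus invariant rational curve. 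Granting this, for every extremal ray $R$ I would fix a torus invariant rational curve $C_R$ with $[C_R]\in R$ and $-(K_{\mathscr F}+\Delta)\cdot C_R=l_{(\mathscr F,\Delta)}(R)$.

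Next I would invoke Theorem~\ref{a-thm1.1}, which gives $-(K_{\mathscr F}+\Delta)\cdot C_R=l_{(\mathscr F,\Delta)}(R)\leq r+1$ for every extremal ray $R$. Since $\NE(X)$ is polyhedral, it equals the sum of its extremal rays, so taking $\{C_i\}$ to be the finite collection $\{C_R\}_R$ yields $\overline{\NE}(X)=\NE(X)=\sum_i\mathbb R_{\geq 0}[C_i]$ with every $C_i$ a torus invariant curve satisfying $-(K_{\mathscr F}+\Delta)\cdot C_i\leq r+1$, which is the first assertion. For the last assertion I would fix an extremal ray $R$ and suppose that the contraction $\varphi_R\colon X\to Y$ is \emph{not} a $\mathbb P^r$-bundle over $Y$ with $\mathscr F=\mathscr T_{X/Y}$ and the sum of the coefficients of $\Delta$ less than one; then, reading the ``moreover'' part of Theorem~\ref{a-thm1.1} contrapositively, $l_{(\mathscr F,\Delta)}(R)\leq r$, so the curve $C_R$ already satisfies $R=\mathbb R_{\geq 0}[C_R]$ and $-(K_{\mathscr F}+\Delta)\cdot C_R\leq r$, as required.

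The only substantive input beyond Theorem~\ref{a-thm1.1} is the toric Mori theory recalled in the first step; everything else is bookkeeping. The step I expect to require the most care is the assertion that the infimum over \emph{all} rational curves in $R$ which defines $l_{(\mathscr F,\Delta)}(R)$ is realized by a \emph{torus invariant} curve: this is exactly where the toric structure (the fan, and the geometry of the contraction $\varphi_R$) has to be used, rather than any purely cone-theoretic argument, and it is the one place where one should not simply appeal to abstract polyhedrality.
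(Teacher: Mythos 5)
Your argument is correct and follows essentially the same route as the paper: the paper's proof likewise just recalls that $\NE(X)$ is a rational polyhedral cone spanned by torus invariant curves and then invokes Theorem \ref{a-thm1.1}. The one point you rightly flag---that the bounds must be witnessed by \emph{torus invariant} curves generating $R$---is settled by observing that the proof of Theorem \ref{a-thm1.1} establishes the relevant inequalities directly on torus invariant curves $V(\mu)$ (and, in the $\mathbb P^r$-bundle case, on a torus invariant line in a fiber), so no separate ``the minimum is attained on a torus invariant curve'' statement is actually needed.
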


Corollary \ref{a-cor1.2} is almost obvious by Theorem \ref{a-thm1.1}. 
It is a generalization of the cone theorem for toric varieties 
established in \cite[Theorem 0.1]{fujino}. 
More precisely, if $\rank \mathscr F=\dim X$, 
then Theorem \ref{a-thm1.1} and Corollary \ref{a-cor1.2} 
recovers \cite[Theorem 0.1]{fujino}. 
Fujita's freeness for 
log canonical toric foliated pairs is an easy consequence 
of the cone theorem:~Corollary \ref{a-cor1.2}. 

\begin{thm}[Fujita's freeness for 
toric foliated pairs]\label{a-thm1.3}
Let $(\mathscr F, \Delta)$ be a log canonical toric foliated 
pair on a projective 
toric variety $X$. 
Let $r$ denote the rank of $\mathscr F$. 
Let $H$ be a Cartier divisor on $X$ such that 
$\left( H-(K_{\mathscr F}+\Delta)\right)\cdot C\geq r+1$ holds 
for every torus invariant curve $C$ on $X$. 
Then the complete linear system 
$|H|$ is basepoint-free. 
\end{thm}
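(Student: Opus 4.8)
The plan is to deduce basepoint-freeness from the toric cone theorem (Corollary \ref{a-cor1.2}) by the standard ``contract the obstruction'' induction on $\dim X$, exactly as in Fujino's toric Fujita-freeness argument but now using the foliated length estimate. Set $L = H - (K_{\mathscr F}+\Delta)$, so the hypothesis reads $L\cdot C \ge r+1$ for every torus invariant curve $C$. First I would reduce to showing that $H$ is semiample and that $|H|$ separates torus-fixed points: since $X$ is toric, $\Supp(\operatorname{Bs}|H|)$ is a union of torus orbit closures, so it suffices to check that for each maximal cone $\sigma$ the corresponding fixed point is not a base point. If $H$ is already nef and the nef value is small enough we are in good shape; the real content is when some extremal ray $R$ has $H\cdot C_R$ small or negative for the generating curve $C_R$.

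The main step is an analysis along an extremal ray $R$ of $\NE(X)$ with contraction $\varphi_R\colon X\to Y$. By Corollary \ref{a-cor1.2} we may pick a torus invariant curve $C$ generating $R$ with $-(K_{\mathscr F}+\Delta)\cdot C \le r$, \emph{unless} $\varphi_R$ is a $\mathbb P^r$-bundle with $\mathscr F = \mathscr T_{X/Y}$ and $\sum(\text{coefficients of }\Delta)<1$. In the generic case $-(K_{\mathscr F}+\Delta)\cdot C\le r$, so $H\cdot C = L\cdot C + (K_{\mathscr F}+\Delta)\cdot C \ge (r+1) - r = 1 > 0$, hence $H$ is $\varphi_R$-ample (being a torus invariant Cartier divisor positive on the ray); then $H$ descends to an ample-on-fibers bundle and one runs induction/descent to $Y$. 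In the exceptional $\mathbb P^r$-bundle case the fibers are $\mathbb P^r$ and on a line $\ell$ in a fiber one computes $-(K_{\mathscr F}+\Delta)\cdot \ell = -(K_{\mathscr T_{X/Y}}+\Delta)\cdot\ell = (r+1) - (\Delta\cdot\ell)$, and since $\Delta\cdot\ell = \sum(\text{coefficients})<1$ we still get $-(K_{\mathscr F}+\Delta)\cdot\ell > r$, whence $H\cdot\ell = L\cdot\ell + (K_{\mathscr F}+\Delta)\cdot\ell \ge (r+1) - (r+1) + (\Delta\cdot\ell)$... here one must be slightly careful: the hypothesis gives $L\cdot\ell\ge r+1$, so $H\cdot\ell \ge (r+1) - (r+1-\Delta\cdot\ell) = \Delta\cdot\ell \ge 0$, and in fact $H\cdot\ell$ is a nonnegative integer; if it is $0$ then $H$ is trivial on fibers and descends to $Y$, if positive it is relatively ample. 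Either way $H$ is $\varphi_R$-semiample, so $\varphi_{R*}\mathcal O_X(H)$ is locally free and generated, and $H$ is basepoint-free along fibers of $\varphi_R$.

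Putting the ray-by-ray analysis together: one performs the toric MMP-style dévissage, repeatedly contracting extremal rays on which $H$ is strictly positive (Mori fiber type or divisorial/flipping contractions), checking at each stage that the pushed-forward divisor still satisfies the numerical hypothesis relative to the pushed-forward foliated pair so that induction on Picard number (or on $\dim X$ via the fibration case) applies, until $H$ becomes nef; a nef torus invariant Cartier divisor on a projective toric variety is automatically basepoint-free (its sections are the lattice points of the associated polytope). I expect the main obstacle to be the bookkeeping in the $\mathbb P^r$-bundle case: verifying that after restricting to fibers and descending, the foliated pair $(\mathscr F,\Delta)$ interacts correctly with $\varphi_R$ — specifically that $\mathscr F=\mathscr T_{X/Y}$ forces the numerical inequality to be inherited on $Y$ by the foliation $\varphi_{R*}\mathscr F$ (or its appropriate pullback-normalization) with the descended divisor — and handling the non-$\mathbb Q$-factorial singularities of $X$ when we iterate contractions. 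The length bound $r+1$ from Theorem \ref{a-thm1.1} is exactly what makes the threshold $r+1$ in the hypothesis on $L\cdot C$ sharp, so no slack is available and each inequality must be tracked tightly.
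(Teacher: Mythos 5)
Your core computation is correct and is in fact the entirety of the paper's proof, but you have buried it under machinery that is never needed and you do not seem to notice that your own ray-by-ray analysis already finishes the argument. Corollary \ref{a-cor1.2} says $\NE(X)=\sum_i\mathbb R_{\geq 0}[C_i]$ with $-(K_{\mathscr F}+\Delta)\cdot C_i\leq r+1$ for every generator $C_i$; combined with the hypothesis $\left(H-(K_{\mathscr F}+\Delta)\right)\cdot C_i\geq r+1$ this gives $H\cdot C_i\geq 0$ for every $i$, so $H$ is nef, and a nef Cartier divisor on a projective toric variety is basepoint-free. That is the whole proof. In particular: (i) the case distinction between the ``generic'' extremal ray (bound $r$) and the $\mathbb P^r$-bundle case is unnecessary here --- the uniform bound $r+1$ suffices because the hypothesis already carries the threshold $r+1$; that finer dichotomy is only needed for Theorem \ref{a-thm1.4}, where the coefficient of $A$ drops to $r$; (ii) the induction on $\dim X$, the descent of $H$ to $Y$, the ``MMP-style d\'evissage'' contracting rays, and the bookkeeping about how $(\mathscr F,\Delta)$ transforms under $\varphi_R$ are all superfluous --- once you know $H$ is nonnegative on each extremal ray of the rational polyhedral cone $\NE(X)$, nefness is immediate and no contraction is ever performed. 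You flag the interaction of the foliation with $\varphi_R$ and the non-$\mathbb Q$-factorial singularities as the ``main obstacle,'' but these issues simply do not arise: all of that difficulty was already absorbed into Theorem \ref{a-thm1.1} and Corollary \ref{a-cor1.2}, which hold without $\mathbb Q$-factoriality. So the proposal reaches a correct proof, but you should strip it down to the two-line deduction above.
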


For toric foliations on smooth 
projective toric varieties, we have the following statement 
on Fujita's freeness. 

\begin{thm}[Fujita's freeness for toric foliations]\label{a-thm1.4}
Let $\mathscr F$ be a toric foliation with $\rank \mathscr F=r$ 
on a smooth projective toric variety $X$. 
Let $A$ be an ample Cartier divisor on $X$. 
Then $|K_{\mathscr F}+(r+1)A|$ is basepoint-free. 
Moreover, $|K_{\mathscr F}+rA|$ is basepoint-free unless 
$X$ has a $\mathbb P^r$-bundle structure 
$\varphi\colon X\to Y$, $\mathscr F=\mathscr T_{X/Y}$, 
and $A\cdot \ell=1$ for a line $\ell$ in a fiber of $\varphi\colon X\to Y$. 
\end{thm}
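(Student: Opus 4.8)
The plan is to deduce both assertions from the results already in hand --- Theorem~\ref{a-thm1.3} and Corollary~\ref{a-cor1.2}, applied with $\Delta=0$ --- together with two standard facts about a smooth projective toric variety $X$: every rank one reflexive sheaf on $X$ is invertible, so $K_{\mathscr F}$ and each $K_{\mathscr F}+mA$ with $m\in\mathbb Z$ are Cartier divisors; and a Cartier divisor on a complete toric variety is basepoint-free if and only if it is nef, i.e.\ if and only if it is nonnegative on every torus invariant curve. Since $X$ is smooth, $(\mathscr F,0)$ is a log canonical toric foliated pair, so Theorem~\ref{a-thm1.3} and Corollary~\ref{a-cor1.2} are indeed available with $\Delta=0$; note that with $\Delta=0$ the clause ``the sum of the coefficients of $\Delta$ is less than one'' is automatically satisfied.

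For the first assertion I would simply apply Theorem~\ref{a-thm1.3} with $\Delta=0$ and $H=K_{\mathscr F}+(r+1)A$. Indeed, for every torus invariant curve $C$ on $X$ one has $A\cdot C\geq 1$ since $A$ is ample and Cartier, hence $\left(H-(K_{\mathscr F}+\Delta)\right)\cdot C=(r+1)(A\cdot C)\geq r+1$; Theorem~\ref{a-thm1.3} then yields that $|K_{\mathscr F}+(r+1)A|$ is basepoint-free.

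For the ``moreover'' part I would argue by contraposition. Suppose $|K_{\mathscr F}+rA|$ is not basepoint-free; then the Cartier divisor $K_{\mathscr F}+rA$ is not nef, so some torus invariant curve $C$ satisfies $(K_{\mathscr F}+rA)\cdot C<0$. By Corollary~\ref{a-cor1.2} with $\Delta=0$, choose generators $C_1,\dots,C_k$ of $\NE(X)=\overline{\NE}(X)$ such that, writing $R_i=\mathbb R_{\geq 0}[C_i]$, one has $-K_{\mathscr F}\cdot C_i\leq r$ unless $\varphi_{R_i}$ is a $\mathbb P^r$-bundle with relative tangent sheaf $\mathscr F$, in which case one takes $C_i$ to be a line in a fiber of $\varphi_{R_i}$. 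Writing $[C]=\sum_i a_i[C_i]$ with all $a_i\geq 0$, from $0>\sum_i a_i\,(K_{\mathscr F}+rA)\cdot C_i$ we obtain an index $i$ with $(K_{\mathscr F}+rA)\cdot C_i<0$, hence $-K_{\mathscr F}\cdot C_i>r\,(A\cdot C_i)\geq r$; by the choice of the $C_i$ this forces $\varphi_{R_i}\colon X\to Y$ to be a $\mathbb P^r$-bundle with $\mathscr F=\mathscr T_{X/Y}$. For a line $\ell$ in a fiber $F\cong\mathbb P^r$ one has $K_{\mathscr F}|_F=K_F$, so $-K_{\mathscr F}\cdot\ell=r+1$; since $[C_i]$ is a positive multiple of $[\ell]$, the inequality $-K_{\mathscr F}\cdot C_i>r\,(A\cdot C_i)$ gives $r+1>r\,(A\cdot\ell)$, i.e.\ $A\cdot\ell<\frac{r+1}{r}$, and combined with $A\cdot\ell\geq 1$ this forces $A\cdot\ell=1$. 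This proves the contrapositive, hence the theorem. (Conversely, such a $\mathbb P^r$-bundle with $\mathscr F=\mathscr T_{X/Y}$ and $A\cdot\ell=1$ does have $(K_{\mathscr F}+rA)\cdot\ell=-(r+1)+r=-1<0$, so in that situation $|K_{\mathscr F}+rA|$ is genuinely not basepoint-free.)

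The routine parts above are genuinely routine. The step I expect to require the most care is the third paragraph, where the cone-theorem generators have to be chosen so that the single inequality $-K_{\mathscr F}\cdot C_i>r$ really does pick out a $\mathbb P^r$-bundle ray, and where the integrality of $A\cdot C$ and of $-K_{\mathscr F}\cdot C$ --- both consequences of the smoothness of $X$ --- is used to turn the numerical inequalities into the exact value $A\cdot\ell=1$. One small point worth pinning down at the outset is that smoothness of $X$ does guarantee that $(\mathscr F,0)$ is log canonical, so that the earlier results apply with $\Delta=0$.
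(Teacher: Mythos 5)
Your proposal is correct and follows essentially the same route as the paper: the paper's proof of Theorem \ref{a-thm1.4} likewise reduces both claims to checking nefness of $K_{\mathscr F}+(r+1)A$ and $K_{\mathscr F}+rA$ against the torus invariant generators provided by Corollary \ref{a-cor1.2} (with $\Delta=0$), exactly as in the proof of Theorem \ref{a-thm1.3}. Your write-up just supplies the details (integrality of $A\cdot C$, the computation $-K_{\mathscr F}\cdot\ell=r+1$ on a fiber line, and the resulting forcing of $A\cdot\ell=1$) that the paper leaves implicit.
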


If $\rank \mathscr F=\dim X$ in Theorem \ref{a-thm1.4}, 
then it is nothing but the original version of 
Fujita's freeness for smooth projective toric varieties. 
Since any ample Cartier divisor on a smooth 
projective toric variety is very ample, 
we have the following statement on Fujita's very ampleness. 

\begin{thm}[Fujita's very ampleness for toric foliations]\label{a-thm1.5}
Let $\mathscr F$ be a toric foliation with $\rank \mathscr F=r$ 
on a smooth projective toric variety $X$. 
Let $A$ be an ample Cartier divisor on $X$. 
Then $|K_{\mathscr F}+(r+2)A|$ is very ample. 
Moreover, $|K_{\mathscr F}+(r+1)A|$ is very ample unless 
$X$ has a $\mathbb P^r$-bundle structure 
$\varphi\colon X\to Y$, $\mathscr F=\mathscr T_{X/Y}$, 
and $A\cdot \ell=1$ for a line $\ell$ in a fiber of $\varphi\colon X\to Y$. 
\end{thm}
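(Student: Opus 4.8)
The plan is to derive Theorem~\ref{a-thm1.5} directly from the basepoint-freeness statements of Theorem~\ref{a-thm1.4}, using the elementary principle that the sum of a basepoint-free Cartier divisor and a very ample Cartier divisor is very ample. The starting observation, recalled just above the statement, is that on a smooth projective toric variety every ample Cartier divisor is very ample; thus $A$ is very ample and defines a closed embedding $\iota_A\colon X\hookrightarrow\mathbb P^M$ with $\iota_A^\ast\mathcal O_{\mathbb P^M}(1)\cong\mathcal O_X(A)$.

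First I would isolate the auxiliary statement: if $D$ is a basepoint-free Cartier divisor on a projective variety and $A$ is a very ample Cartier divisor, then $D+A$ is very ample. To see this, let $f\colon X\to\mathbb P^N$ be the morphism defined by $|D|$, so that $f^\ast\mathcal O_{\mathbb P^N}(1)\cong\mathcal O_X(D)$, and form $h=(\iota_A,f)\colon X\to\mathbb P^M\times\mathbb P^N$. Since the composition of $h$ with the first projection is the closed embedding $\iota_A$ and $X$ is proper, $h$ is itself a closed embedding; pulling back the very ample sheaf $\mathcal O_{\mathbb P^M\times\mathbb P^N}(1,1)$ gives $\iota_A^\ast\mathcal O(1)\otimes f^\ast\mathcal O(1)\cong\mathcal O_X(A+D)$, so $A+D$ is very ample. (Alternatively one can phrase this combinatorially in terms of Minkowski sums of polytopes, but the embedding argument keeps the proof self-contained.)

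Granting this, the two assertions follow at once. Theorem~\ref{a-thm1.4} tells us that $|K_{\mathscr F}+(r+1)A|$ is basepoint-free, so taking $D=K_{\mathscr F}+(r+1)A$ in the auxiliary statement shows that $K_{\mathscr F}+(r+2)A=(K_{\mathscr F}+(r+1)A)+A$ is very ample. For the refined statement, suppose we are not in the exceptional case of Theorem~\ref{a-thm1.4}, that is, $X$ has no $\mathbb P^r$-bundle structure $\varphi\colon X\to Y$ with $\mathscr F=\mathscr T_{X/Y}$ and $A\cdot\ell=1$ for a line $\ell$ in a fiber. Then Theorem~\ref{a-thm1.4} gives that $|K_{\mathscr F}+rA|$ is basepoint-free, and applying the auxiliary statement with $D=K_{\mathscr F}+rA$ yields that $K_{\mathscr F}+(r+1)A=(K_{\mathscr F}+rA)+A$ is very ample.

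Finally, to see that the exclusion is genuinely needed, in the exceptional case one restricts to a fiber $F\cong\mathbb P^r$ of $\varphi$: there $\mathscr T_{X/Y}$ restricts to the tangent bundle of $\mathbb P^r$, so $K_{\mathscr F}|_F\cong\mathcal O_{\mathbb P^r}(-r-1)$ while $A|_F\cong\mathcal O_{\mathbb P^r}(1)$, whence $(K_{\mathscr F}+(r+1)A)|_F\cong\mathcal O_{\mathbb P^r}$, which is not very ample for $r\ge 1$ (and for $r=0$ the excluded case does not occur). Since the deduction is purely formal once Theorem~\ref{a-thm1.4} is available, there is no substantial obstacle here; the only point that calls for any care is the auxiliary ``basepoint-free $+$ very ample $=$ very ample'' lemma, and the product-of-morphisms argument above settles it cleanly.
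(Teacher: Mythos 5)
Your proposal is correct and follows essentially the same route as the paper: the paper's proof simply observes that $A$ is very ample (citing Oda) and then invokes Theorem \ref{a-thm1.4}, implicitly using the fact that the sum of a basepoint-free divisor and a very ample divisor is very ample. You merely make that auxiliary lemma explicit via the product-embedding argument (and add an unrequired but correct check that the exceptional case genuinely fails), so there is nothing to object to.
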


Finally, although we do not treat any applications in this paper, we 
show that the Kodaira vanishing theorem holds 
for log canonical toric foliated pairs. 

\begin{thm}[Kodaira's vanishing theorem for 
toric foliated pairs]\label{a-thm1.6}
Let $(\mathscr F, \Delta)$ be a log 
canonical toric foliated pair on a projective toric variety $X$. 
Let $L$ be a $\mathbb Q$-Cartier Weil divisor on $X$ such that 
$L-(K_{\mathscr F}+\Delta)$ is ample. 
Then $H^i(X, \mathcal O_X(L))=0$ holds for every positive integer $i$. 
\end{thm}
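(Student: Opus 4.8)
The plan is to reduce the assertion to a vanishing statement for an \emph{ordinary} log canonical pair on $X$ and then apply the Kawamata--Viehweg type vanishing theorem for log canonical pairs (in its toric form, which is available for $\mathbb Q$-Cartier Weil divisors). Since the divisor class group of a toric variety is generated by the torus invariant prime divisors, I first replace $L$ by a linearly equivalent torus invariant Weil divisor; this changes neither the reflexive sheaf $\mathcal O_X(L)$, nor the property that $L$ is $\mathbb Q$-Cartier, nor the ampleness of $L-(K_{\mathscr F}+\Delta)$. So from now on $L=\sum_\rho b_\rho D_\rho$, where $D_\rho$ is the prime divisor attached to the ray $\rho$ with primitive generator $v_\rho$.

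Next I rewrite $K_{\mathscr F}+\Delta$ in terms of the toric boundary. Recall the description of the foliated canonical divisor: if $\mathscr F$ is the toric foliation associated with the subspace $W\subseteq N_{\mathbb R}$, then $K_{\mathscr F}=-\sum_{v_\rho\in W}D_\rho$, equivalently $K_{\mathscr F}=K_X+\sum_{v_\rho\notin W}D_\rho$ with $K_X=-\sum_\rho D_\rho$, and $D_\rho$ is $\mathscr F$-invariant exactly when $v_\rho\notin W$. Writing $\Delta=\sum_\rho\delta_\rho D_\rho$ with all $\delta_\rho\ge 0$, the log canonicity of the toric foliated pair $(\mathscr F,\Delta)$ forces $\delta_\rho=0$ whenever $v_\rho\notin W$ and $\delta_\rho\le 1$ whenever $v_\rho\in W$. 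Hence
\[
K_{\mathscr F}+\Delta=K_X+B,\qquad B:=\sum_{v_\rho\notin W}D_\rho+\sum_{v_\rho\in W}\delta_\rho D_\rho ,
\]
and $B$ is a torus invariant $\mathbb Q$-divisor with $0\le B\le\sum_\rho D_\rho$. Since the toric boundary pair $\bigl(X,\sum_\rho D_\rho\bigr)$ is log canonical and lowering the boundary while keeping it effective preserves log canonicity, $(X,B)$ is log canonical.

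Now set $A:=L-(K_X+B)=L-(K_{\mathscr F}+\Delta)$, which is ample and $\mathbb Q$-Cartier by hypothesis. In particular $A$ is nef, and it is log big with respect to $(X,B)$: every log canonical center of the toric pair $(X,B)$ is a torus invariant subvariety, and the restriction of the ample divisor $A$ to it stays ample, hence big. Therefore the vanishing theorem for log canonical pairs, applied to $L=(K_X+B)+A$, yields $H^i(X,\mathcal O_X(L))=0$ for every $i>0$, which is the assertion. For a self-contained variant one may instead invoke the combinatorial description of sheaf cohomology on toric varieties, $H^i(X,\mathcal O_X(L))\cong\bigoplus_{m\in M}\widetilde H^{\,i-1}(Z_{L,m};\mathbb C)$, where $S\subseteq N_{\mathbb R}$ is the unit sphere, $\psi_L$ is the support function of $L$, and $Z_{L,m}=\{v\in S\mid\langle m,v\rangle<\psi_L(v)\}$, and then use the ampleness of $A$ to show that each $Z_{L,m}$ is empty or $\mathbb C$-acyclic.

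The delicate point is exactly this acyclicity. Unlike in the classical situation, $L$ itself is \emph{not} nef here, so $\psi_L$ need not be concave and the sets $Z_{L,m}$ are not a priori convex; the positivity has to be extracted from $A$ alone, while $K_{\mathscr F}+\Delta=K_X+B$ contributes only a bounded, possibly non-concave perturbation of the support function. This is precisely the phenomenon that the log canonical vanishing theorem takes care of, which is why passing through the structural identity $K_{\mathscr F}+\Delta=K_X+B$ and then quoting that theorem is the efficient route. A routine secondary point is that $L$ is only $\mathbb Q$-Cartier Weil rather than Cartier: the combinatorial formula is insensitive to this, and the toric vanishing theorem is available in that generality.
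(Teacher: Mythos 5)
Your proposal is correct and follows essentially the same route as the paper: both rest on rewriting $K_{\mathscr F}+\Delta=K_X+B$ with $B$ the torus invariant log canonical boundary determined by Lemma \ref{a-lem2.6}, after which the claim is a known toric vanishing theorem. The only difference is in the final citation: the paper perturbs the coefficients of $B$ into $[0,1)$ (using that ampleness of $L-(K_X+B)$ is an open condition) so as to apply the klt-type statement \cite[Corollary 1.7]{fujino3}, whereas you invoke the log canonical version with the nef-and-log-big hypothesis directly; both are legitimate since $A=L-(K_X+B)$ is ample and hence big on every torus invariant lc center.
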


It is a special case of the vanishing theorems 
established in \cite{fujino3}. 

\begin{ack}\label{a-ack}
The first author was partially 
supported by JSPS KAKENHI Grant Numbers 
JP20H00111, JP21H04994, JP23K20787. 
The second author was partially
supported by JSPS KAKENHI Grant Number 
JP24K06679. 
The authors would like to thank Fanjun Meng for pointing 
out that a conjecture in the original version is obviously wrong. 
They also would like to thank the referee very much for useful comments. 
\end{ack}

In this paper, we will use the same notation 
as in \cite{fujino-sato2}. We will freely use 
the basic definitions and results in \cite{fujino-sato2}. 
For the details of 
toric varieties, see \cite{oda1}, \cite{oda2}, 
\cite{fulton}, and \cite{cls}. 
For basic definitions and 
results of the theory of minimal models, 
see \cite{fujino-fundamental} and \cite{fujino-foundations}. 

\section{Preliminaries}\label{a-sec2}

In this section, we collect some definitions and results 
for the reader's convenience. 
Let us start with the definition of {\em{foliations}} on 
normal algebraic varieties. 

\begin{defn}[Foliations and toric foliations]\label{a-def2.1}
A {\em{foliation}} on a normal algebraic variety 
$X$ is a saturated subsheaf 
$\mathscr F\subset \mathscr T_X$ that is closed under the Lie bracket, 
where $\mathscr T_X$ is  the tangent 
sheaf of $X$. 
We note that the {\em{rank}} of the foliation $\mathscr F$ 
means the rank of the coherent sheaf $\mathscr F$. 

We further assume that $X$ is toric. 
Then a foliation $\mathscr F$ on $X$ is called {\em{toric}} if 
the sheaf $\mathscr F$ is torus equivariant. 
\end{defn}

The following result on toric foliations is a starting point of 
\cite{fujino-sato2} and this paper. 

\begin{thm}[{see \cite{pang}}]\label{a-thm2.2}
Let $X=X(\Sigma)$ be a $\mathbb Q$-factorial toric variety 
with its fan $\Sigma$ in the lattice $N\simeq \mathbb Z^n$. 
Then there exists a one-to-one correspondence 
between the set of toric foliations on $X$ and 
the set of complex vector subspaces $V\subset N_{\mathbb C}:=N\otimes 
_{\mathbb Z} \mathbb C\simeq \mathbb C^n$. 

Let $\mathscr F_V$ be the toric foliation associated to 
a complex vector subspace $V\subset N_{\mathbb C}$. 
Then 
\begin{equation}
K_{\mathscr F_V}:=-c_1(\mathscr F_V)=-\sum _{\rho\subset V}D_\rho 
\end{equation} holds, that is, the first Chern class of 
$\mathscr F_V$ is $\sum _{\rho\subset V}D_\rho$, 
where $D_\rho$ is the torus invariant prime divisor corresponding 
to the one-dimensional cone $\rho$ in $\Sigma$. 
In particular, we 
have 
\begin{equation}
K_{\mathscr F_V}=K_X+\sum _{\rho\not\subset V}D_\rho. 
\end{equation}
We note that $\rank \mathscr F=\dim _{\mathbb C} V$. 
\end{thm}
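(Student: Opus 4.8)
My plan is to reduce the entire statement to a computation on the open torus $T=T_N\subset X$ together with a codimension-one analysis along each torus-invariant divisor $D_\rho$. The starting observation is that the $T$-invariant vector fields on $T$ are exactly the elements of $\operatorname{Lie}(T)=N_{\mathbb C}$, so that $\mathscr T_X|_T\cong \mathcal O_T\otimes_{\mathbb C}N_{\mathbb C}$ as a torus-equivariant sheaf, with $T$ acting trivially on the factor $N_{\mathbb C}$. I would first record the guiding principle that a saturated $T$-equivariant subsheaf $\mathscr F\subset\mathscr T_X$ is uniquely determined by its restriction to $T$: indeed, if $j\colon T\hookrightarrow X$ denotes the inclusion, then $\mathscr F$ is the saturation in $\mathscr T_X$ of $j_*(\mathscr F|_T)$, so two such subsheaves agreeing over $T$ must coincide. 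This both makes the correspondence plausible and reduces its construction to the torus.

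Next I would set up the bijection. Given $V\subset N_{\mathbb C}$, I define $\mathscr F_V$ to be the saturation in $\mathscr T_X$ of the image of the evaluation map $\mathcal O_X\otimes_{\mathbb C}V\to\mathscr T_X$, where the elements of $V$ are regarded as the associated invariant vector fields, which are global sections of $\mathscr T_X$. This subsheaf is torus-equivariant and saturated by construction, and involutivity is automatic because $T$ is abelian: for invariant fields $\xi,\eta\in V$ one has $[\xi,\eta]=0$, hence $[f\xi,g\eta]=f\,\xi(g)\,\eta-g\,\eta(f)\,\xi\in\mathcal O_X\otimes V$, and saturating preserves closure under the Lie bracket. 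Conversely, a toric foliation $\mathscr F$ restricts over $T$ to a $T$-equivariant subbundle of $\mathcal O_T\otimes N_{\mathbb C}$; since $T$ acts transitively on itself and trivially on $N_{\mathbb C}$, such a subbundle must be $\mathcal O_T\otimes V$ for the single subspace $V=\mathscr F_e\subset N_{\mathbb C}$ given by the fiber at the identity. The two assignments $V\mapsto\mathscr F_V$ and $\mathscr F\mapsto V$ are mutually inverse by the uniqueness principle above, which yields the one-to-one correspondence; and $\rank\mathscr F_V=\dim_{\mathbb C}V$ because the rank is the generic rank, computed on $T$.

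It then remains to compute $c_1(\mathscr F_V)$. Since $\mathscr F_V$ is reflexive and torus-equivariant, its determinant $\det\mathscr F_V=(\wedge^r\mathscr F_V)^{**}$ is a torus-invariant Weil divisor class, so $c_1(\mathscr F_V)=\sum_\rho a_\rho D_\rho$ with $a_\rho\in\mathbb Z$, and it suffices to determine each $a_\rho$ at the generic point $\eta_\rho$ of $D_\rho$, which lies in the smooth locus of $X$. Working in a smooth affine model $\mathbb A^n$ around $\rho$ with coordinates $x_1,\dots,x_n$, $D_\rho=\{x_1=0\}$, and invariant fields $e_i\leftrightarrow x_i\partial_{x_i}$, I would pick a basis $\xi_1,\dots,\xi_r$ of the invariant fields spanning $V$ and expand
\[
\xi_1\wedge\cdots\wedge\xi_r=\sum_{|I|=r}p_I\Big(\prod_{i\in I}x_i\Big)\,\partial_{x_I},
\]
where the $p_I$ are the Pl\"ucker coordinates of $V$ and $\partial_{x_I}=\bigwedge_{i\in I}\partial_{x_i}$. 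Passing to the saturation divides out the greatest common monomial factor of the coefficients, which is $\prod_{k:\,e_k\in V}x_k$: the variable $x_k$ divides every nonzero term exactly when every $I$ with $p_I\neq0$ contains $k$, and this happens precisely when $e_k\in V$, to order one. Hence $a_\rho=1$ if $v_\rho\in V$, i.e. $\rho\subset V$, and $a_\rho=0$ otherwise, giving $c_1(\mathscr F_V)=\sum_{\rho\subset V}D_\rho$ and $K_{\mathscr F_V}=-\sum_{\rho\subset V}D_\rho$. The alternative expression is then immediate from the toric formula $K_X=-\sum_\rho D_\rho$, since $-\sum_{\rho\subset V}D_\rho=K_X+\sum_{\rho\not\subset V}D_\rho$.

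I expect the main obstacle to be the determinant computation in the $\mathbb Q$-factorial, hence possibly singular, setting. The point is to justify that the coefficient $a_\rho$ may legitimately be read off at the smooth generic point $\eta_\rho$ even though $D_\rho$ need not be Cartier, and that the saturation contributes order exactly one, with no spurious contribution from non-invariant common factors. The first issue I would handle by computing the valuation of $c_1(\mathscr F_V)$ in the discrete valuation ring $\mathcal O_{X,\eta_\rho}$, reducing everything to the smooth local model; the second by noting that any common factor of a torus-equivariant section is a monomial, so the Pl\"ucker expansion above detects it exactly.
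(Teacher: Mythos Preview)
The paper does not supply its own proof of this theorem: it is quoted from Pang's thesis \cite{pang} (with pointers also to \cite{wang} and \cite{chang-chen}) and used as a black box throughout. So there is no argument in the paper to compare your proposal against.

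That said, your outline is a correct self-contained proof. The bijection is set up exactly as one should expect: over the open torus $\mathscr T_X$ trivializes as $\mathcal O_T\otimes N_{\mathbb C}$ with trivial $T$-action on $N_{\mathbb C}$, so a $T$-equivariant sub-bundle must be $\mathcal O_T\otimes V$, and saturation recovers $\mathscr F$ globally from $V$. The cleanest way to justify ``saturating preserves closure under the Lie bracket'' is to note that $([\mathscr F_V,\mathscr F_V]+\mathscr F_V)/\mathscr F_V$ is a torsion subsheaf of the torsion-free sheaf $\mathscr T_X/\mathscr F_V$, hence zero.

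Your $c_1$ computation is also correct, but the phrase ``passing to the saturation divides out the greatest common monomial factor'' should be unpacked at a single $\eta_\rho$ rather than on a putative smooth $\mathbb A^n$-chart, which need not exist when $X$ is merely $\mathbb Q$-factorial. Near $\eta_\rho$ the variety looks like $\mathbb A^1\times(\mathbb C^*)^{n-1}$ with $v_\rho\leftrightarrow x_1\partial_{x_1}$, so only $x_1$ can contribute. If $v_\rho\in V$, take $\xi_1=v_\rho$; then the saturation replaces $x_1\partial_{x_1}$ by $\partial_{x_1}$, so $s=\xi_1\wedge\cdots\wedge\xi_r=x_1\cdot(\text{local generator of }\det\mathscr F_V)$ and $a_\rho=1$. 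If $v_\rho\notin V$, the $\xi_j$ stay fiberwise independent along $D_\rho$, the image is already saturated, and $s$ is a local generator, so $a_\rho=0$. Your Pl\"ucker criterion ($e_k\in V$ iff every nonzero $p_I$ contains $k$) is just the identity $e_k\wedge\omega=0\Leftrightarrow e_k\in V$ for the Pl\"ucker vector $\omega$ of $V$, so that step is fine. You already flagged the localization issue as the delicate point, and once it is handled as above the argument is complete.
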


In this paper, we are mainly interested in the case 
where $X$ is not $\mathbb Q$-factorial. 

\begin{rem}\label{a-rem2.3} 
Let $\mathscr F$ be a toric 
foliation on a (not necessarily $\mathbb Q$-factorial) toric 
variety $X$. 
Then we can define $K_{\mathscr F}$ as follows. 
We consider the smooth locus $X_{\mathrm{sm}}$ of $X$. 
Since $X_{\mathrm{sm}}$ is a smooth toric variety, 
we can define $K_{\mathscr F}$ on $X_{\mathrm{sm}}$ as 
in Theorem \ref{a-thm2.2}. 
Since $\codim _X(X\setminus X_{\mathrm{sm}})\geq 2$, we can extend 
it to the whole $X$ and obtain a well-defined torus 
invariant Weil divisor $K_{\mathscr F}$ on 
$X$. 
\end{rem}

For the details of toric 
foliations, see \cite{pang}, \cite{wang} and \cite{chang-chen}. 
We make a remark for the reader's convenience. 

\begin{rem}[{see \cite[Corollary 3.3]{chang-chen}}]\label{a-rem2.4}
Let $\mathscr F_V$ be the toric foliation associated to a complex 
vector subspace 
$V\subset N_{\mathbb C}$. 
Then $D_\rho$ is $\mathscr F_V$-invariant 
if and only if $\rho \not \subset V$. 
\end{rem}

Let us introduce the notion of {\em{torically 
log canonical toric foliated pairs}}. 
We note that if $\rank \mathscr F=\dim X$ in Definition \ref{a-def2.5} then 
it is nothing but the usual definition of toric log canonical pairs. 

\begin{defn}[{Torically log canonical toric foliated pairs}]\label{a-def2.5} 
A {\em{toric foliated pair}} $(\mathscr F, \Delta)$ 
on a toric variety $X$ consists of 
a toric foliation $\mathscr F$ and 
an effective torus invariant $\mathbb R$-divisor 
$\Delta$ on $X$ such that 
$K_{\mathscr F}+\Delta$ is $\mathbb R$-Cartier. 
Let $\pi\colon \widetilde X\to X$ be a proper 
birational toric morphism 
of toric varieties. 
Then we can write 
\begin{equation}
K_{\widetilde {\mathscr F}} +\pi^{-1}_* \Delta
=\pi^*(K_{\mathscr F}+\Delta)+\sum _E 
a(E, \mathscr F, \Delta)E
\end{equation} 
where $\widetilde {\mathscr F}$ is the 
induced foliation on $\widetilde X$ and 
the sum is over all $\pi$-exceptional divisors $E$. 
We call $a(E, \mathscr F, \Delta)$ the {\em{discrepancy}} 
of $E$ with respect to $(\mathscr F, \Delta)$. 
We put $\iota(E)=0$ if $E$ is 
$\widetilde {\mathscr F}$-invariant 
and $\iota (E)=1$ otherwise. 
We say that 
the pair $(\mathscr F, \Delta)$ is {\em{torically log canonical}} 
if $a(E, \mathscr F, \Delta)\geq -\iota(E)$ for 
any proper birational {\em{toric}} morphism $\pi\colon 
\widetilde X\to X$ and 
for any $\pi$-exceptional prime divisor 
$E$ on $\widetilde X$. 
\end{defn}

Although the following lemma is easy to prove, 
it is very important. 

\begin{lem}\label{a-lem2.6}
A toric foliated pair 
$(\mathscr F, \Delta)$ is {\em{torically log canonical}} 
if and only if $\Supp \Delta\subset \Supp K_{\mathscr F}$ 
and the coefficients of $\Delta$ are in $[0, 1]$. 
\end{lem}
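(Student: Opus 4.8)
The plan is to reduce everything to a computation on the smooth locus of $X$, since by Remark \ref{a-rem2.3} the divisor $K_{\mathscr F}$ is defined there and the condition of being torically log canonical is about exceptional divisors over $X$, which we may resolve starting from a toric resolution. First I would recall from Theorem \ref{a-thm2.2} that, writing $\mathscr F=\mathscr F_V$ for the vector subspace $V\subset N_{\mathbb C}$, we have $K_{\mathscr F}=-\sum_{\rho\subset V}D_\rho$ on the smooth locus, so that $\Supp K_{\mathscr F}$ is exactly the union of the torus invariant prime divisors $D_\rho$ with $\rho\subset V$; by Remark \ref{a-rem2.4} these are precisely the non-$\mathscr F$-invariant divisors. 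Thus the condition $\Supp\Delta\subset\Supp K_{\mathscr F}$ says that every component of $\Delta$ is a non-$\mathscr F$-invariant divisor, i.e.\ has $\iota=1$.

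Next I would set up the discrepancy computation. For a toric valuation one only needs to check exceptional divisors obtained by a single star subdivision, or more precisely it suffices to check divisors $E=E_v$ corresponding to primitive lattice vectors $v$ in the relative interior of some cone $\sigma\in\Sigma$; this is the standard reduction in toric discrepancy computations. Write $v=\sum a_i v_i$ with $v_i$ the primitive generators of the rays of $\sigma$ and $a_i>0$. Using $K_{\widetilde{\mathscr F}}+\pi^{-1}_*\Delta=\pi^*(K_{\mathscr F}+\Delta)+a(E,\mathscr F,\Delta)E$, and the fact that $K_{\mathscr F}+\Delta = K_X+\sum_{\rho\not\subset V}D_\rho+\Delta$ (from Theorem \ref{a-thm2.2}), one computes, exactly as in the toric log discrepancy formula,
\begin{equation}
a(E,\mathscr F,\Delta)=-1+\sum_{\rho_i\subset V} a_i+\sum_{\rho_i\not\subset V}a_i(1-\delta_i),
\end{equation}
where $\delta_i$ is the coefficient of $D_{\rho_i}$ in $\Delta$ (so $\delta_i=0$ unless $D_{\rho_i}\subset\Supp\Delta$). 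Meanwhile $\iota(E_v)=0$ precisely when $v\in V$ and $\iota(E_v)=1$ otherwise, by Remark \ref{a-rem2.4} applied on $\widetilde X$.

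The core of the argument is then to match the inequality $a(E,\mathscr F,\Delta)\ge-\iota(E)$ against the two stated conditions. For the ``if'' direction: assuming $\Supp\Delta\subset\Supp K_{\mathscr F}$ (so only rays with $\rho_i\subset V$ can have $\delta_i>0$) and $\delta_i\in[0,1]$, the formula gives $a(E,\mathscr F,\Delta)\ge -1+\sum_{\rho_i\subset V}a_i(1-\delta_i)\cdot(\text{nonneg stuff})\ge -1$, and a short case analysis — if $v\notin V$ then some $a_i$ with $\rho_i\not\subset V$ is positive and contributes a full $a_i$, giving $a\ge -1=-\iota(E)$; if $v\in V$ one needs $a\ge 0$, which follows because then $v$ lies in the span of the rays in $V$ and the relevant coefficient sum can be pushed $\ge 0$ using $\delta_i\le 1$ — settles it. For the ``only if'' direction I would produce explicit valuations: taking $v=v_i$ itself (so $\pi=\mathrm{id}$, already in $\Sigma$) forces $\delta_i\le 1$ from $a(D_{\rho_i},\mathscr F,\Delta)=-\delta_i\ge -\iota(D_{\rho_i})$ when $\rho_i\subset V$ (where $\iota=1$ is not available... careful: $\rho_i\subset V$ means $\iota(D_{\rho_i})=1$, so this only gives $\delta_i\le 1$, consistent); and if some $D_\rho\subset\Supp\Delta$ with $\rho\not\subset V$, i.e.\ $\delta>0$ but $\iota(D_\rho)=0$, then $a(D_\rho,\mathscr F,\Delta)=-\delta<0=-\iota(D_\rho)$ violates torical log canonicity, so $\Supp\Delta\subset\Supp K_{\mathscr F}$ is forced. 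The main obstacle is the bookkeeping in the $v\in V$ case of the ``if'' direction: one must verify that when $v$ is an interior lattice point of $\sigma$ lying in $V$, the combination $\sum_{\rho_i\subset V}a_i(1-\delta_i)+\sum_{\rho_i\not\subset V}a_i$ is at least $1$; this uses that $v\in V$ implies the face of $\sigma$ spanned by the rays contained in $V$ already ``dominates'' $v$ in the appropriate sense, together with the convexity coming from $\delta_i\le 1$. I expect this to be a clean but slightly fiddly lattice-geometry point, and it is essentially the only place where the hypothesis $\Supp\Delta\subset\Supp K_{\mathscr F}$ (rather than merely $\delta_i\le 1$) is genuinely used beyond the trivial divisorial valuations.
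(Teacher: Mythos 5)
Your strategy --- translating the foliated discrepancy condition into an explicit toric discrepancy computation --- is essentially the paper's (the paper just packages the computation as the observation that torical log canonicity of $(\mathscr F,\Delta)$ is, by definition, ordinary log canonicity of the pair $\left(X,\sum_{\rho\not\subset V}D_\rho+\Delta\right)$, and then quotes the standard toric criterion). However, your central formula and your reading of $\iota$ are both wrong, and the two errors do not cancel. By Remark \ref{a-rem2.4} the $\mathscr F$-invariant divisors are those with $\rho\not\subset V$, so $\iota(E_v)=0$ when $v\notin V$ and $\iota(E_v)=1$ when $v\in V$, the opposite of what you state. Moreover, since $K_{\mathscr F}+\Delta=K_X+\sum_{\rho\not\subset V}D_\rho+\Delta$, the rays carrying the extra boundary coefficient $1$ are those \emph{not} contained in $V$, and $K_{\widetilde{\mathscr F}}$ contains the new divisor $E_v$ itself precisely when $v\notin V$. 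The correct formula for the star subdivision at a primitive $v=\sum_i a_iv_i$ is
\begin{equation}
a(E_v,\mathscr F,\Delta)=-1+\varepsilon(v)+\sum_{\rho_i\subset V}a_i(1-\delta_i)-\sum_{\rho_i\not\subset V}a_i\delta_i,
\end{equation}
where $\varepsilon(v)=1$ if $v\notin V$ and $\varepsilon(v)=0$ if $v\in V$. Since $\varepsilon(v)+\iota(E_v)=1$, the condition $a(E_v,\mathscr F,\Delta)\geq-\iota(E_v)$ is exactly $\sum_{\rho_i\subset V}a_i(1-\delta_i)-\sum_{\rho_i\not\subset V}a_i\delta_i\geq 0$, i.e.\ nonnegativity of the ordinary log discrepancy of $\left(X,\sum_{\rho\not\subset V}D_\rho+\Delta\right)$, and the lemma follows with no further lattice geometry.

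With your bookkeeping, by contrast, the ``fiddly'' inequality you defer in the $v\in V$ case amounts to $\sum_ia_i\geq1$ (already when $\Delta=0$ and all $\rho_i\subset V$), and this is false in general: for the cone of the quotient singularity $\tfrac13(1,1)$ with both rays in $V$, the primitive interior vector $v=\tfrac13(v_1+v_2)$ has $\sum_ia_i=\tfrac23$, so your argument would wrongly reject a pair that Lemma \ref{a-lem2.6} declares log canonical. So that step is not merely fiddly; it cannot be completed as stated. A second, smaller gap: in the ``only if'' direction you extract $\delta_i\leq1$ and $\Supp\Delta\subset\Supp K_{\mathscr F}$ from the non-exceptional divisors $D_{\rho_i}$ via $\pi=\mathrm{id}$, but Definition \ref{a-def2.5} imposes the discrepancy inequality only on $\pi$-\emph{exceptional} divisors; you must instead probe the coefficients with genuinely exceptional divisors (e.g.\ the weighted blow-up at $mv_i+v_j$ for $m\gg0$, whose discrepancy tends to $-\infty$ if the coefficient attached to $\rho_i$ exceeds the allowed bound).
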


We prove Lemma \ref{a-lem2.6} for the sake of completeness. 

\begin{proof}
We assume that 
$\mathscr F$ is the toric foliation associated to 
a complex vector subspace $V\subset N_{\mathbb C}$. 
Then we have 
\begin{equation}
K_{\mathscr F}=K_X+\sum _{\rho\not\subset V}D_\rho 
\end{equation} 
by Theorem \ref{a-thm2.2} and Remark \ref{a-rem2.3}. 
We put 
\begin{equation}
\Delta=\sum _\rho b_\rho D_\rho 
\end{equation} 
with $b_\rho \geq 0$. 
Hence we have 
\begin{equation}\label{a-eq2.1}
K_{\mathscr F}+\Delta=K_X+\sum _{\rho\not\subset V} D_\rho +\sum _\rho 
b_\rho D_{\rho}. 
\end{equation} 
By definition, we can easily see 
that $(\mathscr F, \Delta)$ is torically log canonical 
if and only if 
\begin{equation}
\left(X, \sum _{\rho\not\subset V} D_\rho +\sum _\rho 
b_\rho D_{\rho}\right)
\end{equation} 
is log canonical in the usual sense. 
Thus the pair $(\mathscr F, \Delta)$ is 
torically log canonical if and only if 
$\Supp \Delta\subset \Supp K_{\mathscr F}$ and 
the coefficients of $\Delta$ are in $[0, 1]$. 
\end{proof}

Although it is nontrivial, we see 
that our definition of torically log canonical 
toric foliated pairs coincides with the log 
canonicity of toric foliated pairs in the usual sense. 
In this paper, Definition \ref{a-def2.5} and Lemma \ref{a-lem2.6} 
are sufficient for our purposes.
 
\begin{thm}[{\cite[Proposition 4.31]{chang-chen}}]\label{a-thm2.7} 
Let $(\mathscr F, \Delta)$ be a toric foliated pair. 
Then $(\mathscr F, \Delta)$ is torically log canonical if and only if 
$(\mathscr F, \Delta)$ is log canonical in the usual sense for foliated 
pairs. 
\end{thm}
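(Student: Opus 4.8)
One implication is immediate: a toric proper birational morphism is in particular a proper birational morphism, so log canonicity in the usual sense for foliated pairs implies torical log canonicity. For the converse, assume $(\mathscr F,\Delta)$ is torically log canonical, write $\mathscr F=\mathscr F_V$ for a complex vector subspace $V\subset N_{\mathbb C}$, and set
\[
B:=\sum_{\rho\not\subset V}D_\rho+\Delta .
\]
Then $K_{\mathscr F}+\Delta=K_X+B$ by Theorem~\ref{a-thm2.2}, and the proof of Lemma~\ref{a-lem2.6} shows that torical log canonicity of $(\mathscr F,\Delta)$ is equivalent to log canonicity, in the usual sense, of the toric pair $(X,B)$. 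So the plan is to deduce from the latter that $a(E,\mathscr F,\Delta)\ge-\iota(E)$ for every proper birational morphism $\pi\colon\widetilde X\to X$ from a normal variety and every $\pi$-exceptional prime divisor $E$.

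First I would compare foliated and ordinary discrepancies over an arbitrary model. Let $\pi\colon\widetilde X\to X$ be such a morphism, let $\widetilde{\mathscr F}$ be the induced foliation on $\widetilde X$, and subtract the relation
\[
K_{\widetilde X}+\pi^{-1}_*B=\pi^*(K_X+B)+\sum_F a(F,X,B)\,F
\]
from the one in Definition~\ref{a-def2.5}. Using $K_{\mathscr F}+\Delta=K_X+B$ and $\Delta-B=-(K_{\mathscr F}-K_X)$, this gives
\[
\sum_F\bigl(a(F,\mathscr F,\Delta)-a(F,X,B)\bigr)F=K_{\widetilde{\mathscr F}}-K_{\widetilde X}-\pi^{-1}_*(K_{\mathscr F}-K_X),
\]
the sums being over $\pi$-exceptional prime divisors $F$. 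The key step is then the identity
\[
a(E,\mathscr F,\Delta)=a(E,X,B)+1-\iota(E),
\]
which is equivalent to the assertion that $K_{\widetilde{\mathscr F}}=K_{\widetilde X}+\sum_D D$, the sum running over all $\widetilde{\mathscr F}$-invariant prime divisors on $\widetilde X$ --- the exact analogue, on the possibly non-toric model $\widetilde X$, of the formula $K_{\mathscr F}=K_X+\sum_{\rho\not\subset V}D_\rho$ of Theorem~\ref{a-thm2.2} together with Remark~\ref{a-rem2.4}. When $\pi$ is toric this is immediate from Theorem~\ref{a-thm2.2}; in general I would reduce it to a local statement along $E$: passing to smooth loci and using that $\mathscr F_V$ restricts on the big torus to the constant foliation attached to $V\subset N_{\mathbb C}$, one analyzes the conormal sheaf of $\widetilde{\mathscr F}$ and checks that blowing up an $\widetilde{\mathscr F}$-invariant centre contributes coefficient $1$ along the new exceptional divisor while blowing up a non-invariant one contributes $0$.

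Granting the displayed identity we are done, since then $a(E,\mathscr F,\Delta)\ge-\iota(E)$ is equivalent to $a(E,X,B)\ge-1$, and the latter holds for every prime divisor $E$ over $X$ because $(X,B)$ is log canonical. I expect the main obstacle to be exactly this discrepancy comparison over arbitrary birational models: the foliation induced on a non-toric $\widetilde X$ is no longer toric, so Theorem~\ref{a-thm2.2} does not apply directly there, and one must instead argue through the local structure of the linear foliation $\mathscr F_V$ and the behaviour of its conormal sheaf under blow-ups of invariant versus non-invariant centres. An alternative route, which is essentially that of \cite[Proposition~4.31]{chang-chen}, is to produce a toric foliated log resolution by equivariant resolution of singularities and to invoke the fact that log canonicity of a foliated pair can be detected on such a model, whereupon all the required inequalities reduce to the toric computation already contained in the proof of Lemma~\ref{a-lem2.6}.
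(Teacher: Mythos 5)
Your easy direction is fine, and your closing sentence is in fact exactly what the paper does: its entire proof is the one-line reduction ``this follows from Lemma~\ref{a-lem2.6} and \cite[Proposition 4.31]{chang-chen}.'' So the route you label as the ``alternative'' is the paper's route, and to that extent your proposal lands in the right place.

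The problem is the self-contained argument you put in front of it. The discrepancy comparison $a(E,\mathscr F,\Delta)=a(E,X,B)+1-\iota(E)$ over an \emph{arbitrary} birational model is precisely the nontrivial content of \cite[Proposition 4.31]{chang-chen}; you assert it, reduce the theorem to it correctly, but never prove it --- the passage ``one analyzes the conormal sheaf of $\widetilde{\mathscr F}$ and checks\dots'' is the whole difficulty, not a routine verification, and you concede as much. Worse, the global reformulation you offer as ``equivalent,'' namely $K_{\widetilde{\mathscr F}}=K_{\widetilde X}+\sum_D D$ with the sum over \emph{all} $\widetilde{\mathscr F}$-invariant prime divisors of $\widetilde X$, is not correct even on $X$ itself: Theorem~\ref{a-thm2.2} gives $K_{\mathscr F_V}=K_X+\sum_{\rho\not\subset V}D_\rho$, a sum over the $\mathscr F_V$-invariant \emph{torus-invariant} divisors only, whereas the full set of $\mathscr F_V$-invariant prime divisors is typically an infinite family (closures of leaves, e.g.\ the one-parameter family of invariant lines when $V$ is spanned by a ray of $\Sigma$ in $\mathbb P^2$), so that sum is not even a divisor. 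What is actually needed is the local statement that the coefficient of the single exceptional divisor $E$ in $K_{\widetilde{\mathscr F}}-K_{\widetilde X}-\pi^{-1}_*(K_{\mathscr F}-K_X)$ equals $1-\iota(E)$, and establishing that for non-toric centres is exactly what you must either prove or cite. Since the paper cites it, the fair verdict is: your proposal is correct only insofar as it falls back on the same citation; the independent argument preceding it has a genuine gap and a false intermediate claim.
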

\begin{proof} 
This follows from Lemma \ref{a-lem2.6} and 
\cite[Proposition 4.31]{chang-chen}. 
\end{proof}

For the precise definition of 
{\em{log canonical foliated pairs}}, 
see, for example, \cite[Definition 4.1]{chang-chen}. 
By Theorem \ref{a-thm2.7}, we can simply say that a 
toric foliated pair $(\mathscr F, \Delta)$ is 
log canonical when it is torically log canonical. 
We close this section with a remark on the minimal model 
program. 

\begin{rem}\label{a-rem2.8}
Let $(\mathscr F, \Delta)$ be a log canonical 
toric foliated pair on a projective $\mathbb Q$-factorial 
toric variety $X$. 
Then we can run the minimal model program with respect 
to $K_{\mathscr F}+\Delta$ 
(see, for example, \cite{reid}, 
\cite{matsuki}, \cite{fujino}, and \cite{fujino-sato}). 
By \eqref{a-eq2.1} in the proof of Lemma \ref{a-lem2.6}, 
we see that the log canonicity of $(\mathscr F, \Delta)$ is preserved 
by the above minimal model program. 
\end{rem}

\section{Lemmas on projective bundles}\label{a-sec3}

In this section, we prepare some lemmas on projective 
bundles over curves for the proof of Theorem \ref{a-thm1.1}. 
Let us start with an easy lemma on projective bundles over 
a smooth rational curve. 

\begin{lem}\label{a-lem3.1}
Let $\pi\colon X\to Y$ be a $\mathbb P^r$-bundle over $\mathbb P^1$. 
We write 
\begin{equation}
\pi\colon X=\mathbb P_{\mathbb P^1}(\mathcal O_{\mathbb P^1}
\oplus \mathcal O_{\mathbb P^1}(c_1)\oplus \cdots 
\oplus \mathcal O_{\mathbb P^1}(c_r))\to \mathbb P^1
\end{equation} 
with $c_1\leq \cdots \leq c_r$. 
Note that $\pi\colon X\to Y$ is toric. 
If there exists an extremal ray $R$ of $\NE(X)$ such that 
$K_{X/Y}\cdot R=0$, 
then $c_1=\cdots =c_r=0$, 
that is, $X=\mathbb P^r\times \mathbb P^1$ and $\pi$ is the 
second projection. 
\end{lem}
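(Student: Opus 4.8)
The plan is to analyze the Mori cone of the projective bundle $X = \mathbb{P}_{\mathbb{P}^1}(\mathcal{E})$ with $\mathcal{E} = \mathcal{O}\oplus\mathcal{O}(c_1)\oplus\cdots\oplus\mathcal{O}(c_r)$ directly, using that $\dim_{\mathbb{R}}N_1(X) = 2$. First I would recall that $\overline{\mathrm{NE}}(X)$ is a two-dimensional closed cone, hence spanned by exactly two extremal rays: one ray $R_0$ is generated by a line $\ell$ in a fiber of $\pi$, and the other ray $R_1$ is generated by some torus-invariant section-type curve. Since $K_{X/Y}$ restricted to a fiber $\mathbb{P}^r$ is $\mathcal{O}(-r-1)$, we have $K_{X/Y}\cdot\ell = -(r+1) < 0$, so $R_0$ is not the ray on which $K_{X/Y}$ vanishes. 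Therefore the hypothesis $K_{X/Y}\cdot R = 0$ forces $R = R_1$, the "horizontal" extremal ray.

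Next I would identify $R_1$ explicitly. Writing $\xi$ for the tautological divisor class (normalized, say, by the quotient $\mathcal{E}\twoheadrightarrow\mathcal{O}$, so that there is a section $C_0$ with $\xi|_{C_0}$ of degree $0$) and $f$ for the fiber class, one has $K_{X/Y} = -(r+1)\xi + (\deg\mathcal{E})f = -(r+1)\xi + (\sum_i c_i)f$. The curve generating $R_1$ is the minimal section $C_0$ corresponding to the quotient line bundle of smallest degree; here, since $c_1\le\cdots\le c_r$ and $0$ appears as a summand, the relevant extremal horizontal curve $C_1$ has $K_{X/Y}\cdot C_1 = -(r+1)(\xi\cdot C_1) + (\sum_i c_i)(f\cdot C_1)$, and computing this intersection number in terms of the $c_i$ gives $K_{X/Y}\cdot C_1 = \sum_i c_i - (r+1)c_1$ after suitably normalizing $C_1$ to be the section with $\xi\cdot C_1 = c_1$ coming from the quotient $\mathcal{E}\to\mathcal{O}(c_1)$ — or, more cleanly, I would just pick the invariant section along which $K_{X/Y}$ is most negative/least positive and compute. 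The condition $K_{X/Y}\cdot R_1 = 0$ then reads $\sum_{i=1}^r c_i = (r+1)c_1$, i.e. $\sum_{i=2}^r(c_i - c_1) = c_1$.

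I would then combine this with the constraint that $C_1$ (or $C_0$) spans an \emph{extremal} ray, equivalently that $\xi - c_1 f$ (the minimal quotient divisor) is nef but not ample, which together with the normalization forces $c_1 = 0$: indeed, replacing $\mathcal{E}$ by $\mathcal{E}\otimes\mathcal{O}(-c_1)$ we may assume $c_1 = 0$ from the start without changing $X$, and then the equation $\sum_{i=2}^r c_i = 0$ together with $0 = c_1\le c_2\le\cdots\le c_r$ forces $c_2 = \cdots = c_r = 0$. Hence $\mathcal{E}$ is trivial and $X = \mathbb{P}^r\times\mathbb{P}^1$ with $\pi$ the second projection.

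The main obstacle I anticipate is the bookkeeping around normalization of the tautological class and the precise intersection numbers of the torus-invariant horizontal curves — one must be careful about which section generates the second extremal ray (it is governed by the splitting type, specifically by $c_1$ versus $c_r$) and about the sign conventions in $K_{X/Y} = -(r+1)\xi + (\deg\mathcal{E})f$. An alternative, perhaps cleaner, route that avoids this is purely combinatorial: describe the fan of $X$ explicitly (it is the fan of a split toric projective bundle over $\mathbb{P}^1$, with rays determined by the $c_i$), list the torus-invariant curves and their intersection numbers with $K_{X/Y} = -\sum_{\rho\ \text{``vertical''}} D_\rho$, and read off that $K_{X/Y}\cdot R = 0$ on an extremal ray forces all $c_i$ equal, hence (after the twist) all zero. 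I would likely present the argument this second way, since the rest of the paper is written in the toric/fan language and Theorem~\ref{a-thm2.2} is already available.
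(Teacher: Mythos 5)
Your proposal is correct and follows essentially the same route as the paper: both observe that $\overline{\NE}(X)$ has exactly two extremal rays, that $K_{X/Y}=-(r+1)\xi+(\sum_i c_i)f$ is negative on the fiber ray, so $R$ must be the horizontal ray, and then read off from the intersection of $K_{X/Y}$ with the torus-invariant horizontal sections that all twists must coincide and hence vanish. The paper states this even more tersely (it deduces $K_{X/Y}\cdot C\leq 0$ for every horizontal torus-invariant curve and concludes), and your only wobble --- which section of $\mathbb P_Y(\mathcal E)$ generates the second ray, resolved by twisting so the minimal summand is trivial --- is exactly the bookkeeping you flag and handle correctly.
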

\begin{proof}
Since $\pi\colon X\to Y$ is a $\mathbb P^r$-bundle, we have 
\begin{equation}
\mathcal O_X(K_{X/Y})=\pi^*\mathcal O_{\mathbb P^1}\left(
\sum _{i=1}^r c_i\right) \otimes \mathcal O_X(-(r+1)). 
\end{equation}
Note that $\NE(X)$ is spanned by 
two extremal rays. 
One extremal ray corresponds to the projection $\pi\colon X\to Y$. 
Therefore, $K_{X/Y}$ is negative on it. 
By assumption, $K_{X/Y}\cdot C\leq 0$ holds for every horizontal 
torus invariant curve $C$ on $X$. 
This implies $c_1=\cdots =c_r=0$. 
Thus $X=\mathbb P^r\times \mathbb P^1$ and 
$\pi\colon X\to Y$ is the second projection. 
We finish the proof. 
\end{proof}

Lemma \ref{a-lem3.2} is a slight generalization of 
Lemma \ref{a-lem3.1}. 

\begin{lem}\label{a-lem3.2}
Let $\pi\colon X\to Y$ be a $\mathbb P^r$-bundle 
over $\mathbb P^1$ and let $\Delta$ be a torus invariant 
horizontal effective $\mathbb R$-divisor on $X$ 
such that every coefficient of $\Delta$ is less than one. 
If there exists an extremal ray $R$ of $\NE(X)$ such that 
$(K_{X/Y}+\Delta)\cdot R=0$, 
then $X=\mathbb P^r\times \mathbb P^1$ and $\pi$ is the 
second projection. 
\end{lem}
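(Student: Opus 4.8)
The plan is to reduce the assertion to Lemma~\ref{a-lem3.1}: I will show that the given extremal ray is in fact $(K_{X/Y})$-trivial, not merely $(K_{X/Y}+\Delta)$-trivial. Recall that, since $\pi\colon X\to\mathbb P^1$ is a $\mathbb P^r$-bundle, $\NE(X)$ is spanned by exactly two extremal rays: one is $\mathbb R_{\ge0}[f]$ with $f$ the class of a line in a fiber of $\pi$, and the other is spanned by a torus invariant curve $C_0$ which $\pi$ maps onto $\mathbb P^1$. Write $\Delta=\sum_{i=0}^{r}b_iD_i$, where $D_0,\dots,D_r$ are the torus invariant prime divisors on $X$ not contracted by $\pi$ (there are $r+1$ of them, each restricting to one of the $r+1$ coordinate hyperplanes on a fiber $\mathbb P^r$); by hypothesis $0\le b_i<1$ for every $i$, and $\Delta$ is supported on $D_0,\dots,D_r$ since it is horizontal.

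First I would eliminate the fiber ray. As each $D_i$ meets $f$ in a hyperplane of $\mathbb P^r$, we have $D_i\cdot f=1$, so $\Delta\cdot f=\sum_{i=0}^{r}b_i<r+1$, whereas $K_{X/Y}\cdot f=-(r+1)$; hence $(K_{X/Y}+\Delta)\cdot f<0$ and the extremal ray $R$ with $(K_{X/Y}+\Delta)\cdot R=0$ must be $\mathbb R_{\ge0}[C_0]$. Next, from $\mathcal O_X(K_{X/Y})=\pi^*\mathcal O_{\mathbb P^1}(\sum c_i)\otimes\mathcal O_X(-(r+1))$ (as in the proof of Lemma~\ref{a-lem3.1}) together with the identification of the torus invariant prime divisors, one obtains $-K_{X/Y}=\sum_{i=0}^{r}D_i$, so
\begin{equation}
-(K_{X/Y}+\Delta)=\sum_{i=0}^{r}(1-b_i)D_i,\qquad 1-b_i>0\ \text{for every }i.
\end{equation}
The key point is the inequality $D_i\cdot C_0\le0$ for every $i$: on the projective bundle, $C_0$ is the minimal torus invariant section and each $D_i$ is numerically the relative tautological divisor $\mathcal O_X(1)$ twisted down by a pullback from $\mathbb P^1$, so $D_i\cdot C_0=\bigl(\mathcal O_X(1)\cdot C_0\bigr)-(\text{twist}_i)\cdot\bigl(\pi^*\mathcal O_{\mathbb P^1}(1)\cdot C_0\bigr)\le0$. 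Granting this, $0=(K_{X/Y}+\Delta)\cdot C_0=-\sum_{i=0}^{r}(1-b_i)(D_i\cdot C_0)$ is a sum of nonnegative terms, forcing $D_i\cdot C_0=0$ for all $i$; summing gives $K_{X/Y}\cdot C_0=-\sum_{i=0}^{r}D_i\cdot C_0=0$. Thus $R=\mathbb R_{\ge0}[C_0]$ is $(K_{X/Y})$-trivial, and Lemma~\ref{a-lem3.1} yields $X=\mathbb P^r\times\mathbb P^1$ with $\pi$ the second projection.

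I expect the only real obstacle to be the sign inequality $D_i\cdot C_0\le0$ for the $\pi$-horizontal torus invariant prime divisors against the non-fiber extremal curve $C_0$; establishing it cleanly needs either the explicit projective-bundle computation sketched above — identifying $C_0$ with the minimal torus invariant section and each $D_i$ with a relative $\mathcal O_X(1)$ twisted by the base — or an appeal to toric Mori theory on $X$. Everything else (the two-ray structure of $\NE(X)$, the value $K_{X/Y}\cdot f=-(r+1)$, the identity $-K_{X/Y}=\sum_iD_i$, and the reduction to Lemma~\ref{a-lem3.1}) is routine.
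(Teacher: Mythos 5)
Your argument is correct and is essentially the paper's own proof: after ruling out the fiber ray via $(K_{X/Y}+\Delta)\cdot f=-(r+1)+\sum_i b_i<0$, both arguments express $-(K_{X/Y}+\Delta)$ as the positive combination $\sum_{i}(1-b_i)H_i$ of the horizontal invariant divisors and evaluate against the minimal section of the normalized bundle $\mathbb P_{\mathbb P^1}(\mathcal O\oplus\mathcal O(c_1)\oplus\cdots\oplus\mathcal O(c_r))$ with $0\le c_1\le\cdots\le c_r$, obtaining $\sum_i(1-b_i)c_i\le 0$ and hence $c_i=0$ for all $i$. Your closing detour through Lemma \ref{a-lem3.1} is harmless but redundant, since $D_i\cdot C_0=-c_i=0$ for every $i$ already forces $X=\mathbb P^r\times\mathbb P^1$ directly; the only point to make explicit is the normalization guaranteeing all twists $c_i\ge 0$, which is what makes your key inequality $D_i\cdot C_0\le 0$ valid.
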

\begin{proof}
As in the proof of Lemma \ref{a-lem3.1}, 
we write 
\begin{equation}
\pi\colon X=\mathbb P_{\mathbb P^1}(\mathcal O_{\mathbb P^1}(c_0)
\oplus \mathcal O_{\mathbb P^1}(c_1)\oplus \cdots 
\oplus \mathcal O_{\mathbb P^1}(c_r))\to \mathbb P^1
\end{equation} 
with $0=c_0\leq c_1\leq \cdots \leq c_r$.
By assumption, we can write 
\begin{equation}
\Delta=\sum _{i=0}^r b_i H_i
\end{equation} 
with $b_i\in [0, 1)$ such that 
\begin{equation}
\mathcal O_X(H_i)\simeq \mathcal O_X(1)\otimes \pi^*\mathcal O_{\mathbb P^1}
(-c_i)
\end{equation} 
for every $i$. 
Let $P$ be a point of $Y=\mathbb P^1$. 
Then $K_{X/Y}+\Delta$ is $\mathbb R$-linearly equivalent to 
\begin{equation}
\pi^*{\left(\sum_{i=0}^r (1-b_i)c_i P\right)}+
\left(-(r+1)+\sum _{i=0}^rb_i\right)H_0. 
\end{equation} 
As in the proof of Lemma \ref{a-lem3.1}, 
$(K_{X/Y}+\Delta)\cdot C\leq 0$ holds for every horizontal 
torus invariant curve $C$ on $X$. 
This implies that 
\begin{equation}
\sum _{i=0}^r(1-b_i)c_i\leq 0 
\end{equation} holds. 
Hence we obtain $c_0=c_1=\cdots =c_r=0$. 
This is what we wanted. 
\end{proof}

The final lemma in this section is similar to 
Lemma \ref{a-lem3.2} above. 
However, we note that $\pi\colon X\to Y$ is not toric 
when $Y\ne \mathbb P^1$. 

\begin{lem}\label{a-lem3.3}
Let $Y$ be a smooth projective curve and let $\mathcal L_i$ 
be a line bundle on $Y$ for every $i$. We consider a 
$\mathbb P^r$-bundle $\pi\colon 
X:=\mathbb P_Y(\mathcal L_0\oplus \cdots \oplus 
\mathcal L_r)\to Y$ over $Y$. 
Let $H_i$ be the horizontal divisor on $X$ corresponding 
to 
\begin{equation}
\bigoplus _{j=0}^r \mathcal L_j\to \bigoplus _{j\ne i} \mathcal L_j
\end{equation} 
for every $i$. 
We put $\Delta=\sum _{i=0}^r b_i H_i$ such that 
$b_i\in[0, 1)$ for every $i$. 
Assume that there exists an extremal ray $R$ of 
$\overline{\NE}(X)$ such that 
$(K_{X/Y}+\Delta)\cdot R=0$. 
Then $\deg \mathcal L_i=\deg \mathcal L_0$ holds 
for every $i$. 
In particular, if $\deg \mathcal L_0=0$, 
then $\deg \mathcal L_i=0$ holds for every $i$. 
Let $C_i$ be the section of $\pi\colon X\to Y$ corresponding to 
\begin{equation}
\bigoplus _{j=0}^r\mathcal L_j\to \mathcal L_i
\end{equation} 
for every $i$. 
Then the numerical equivalence class of $C_i$ is in $R$ for every $i$. 
\end{lem}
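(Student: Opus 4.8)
The plan is to reduce everything to an explicit computation on $X=\mathbb P_Y(\mathcal L_0\oplus\cdots\oplus\mathcal L_r)$ in terms of the degrees $d_i:=\deg\mathcal L_i$, in the spirit of the proofs of Lemmas \ref{a-lem3.1} and \ref{a-lem3.2}, the only new feature being that $Y$ is an arbitrary smooth projective curve rather than $\mathbb P^1$. First I would twist the bundle so that $\mathcal L_0=\mathcal O_Y$ without loss of generality: replacing $\mathcal L_j$ by $\mathcal L_j\otimes\mathcal L_0^{-1}$ changes $X$ only by a twist and changes each $H_i$ by the pullback of a divisor from $Y$, hence does not affect which ray $R$ satisfies $(K_{X/Y}+\Delta)\cdot R=0$; note $K_{X/Y}$ is itself twist-invariant. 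After this reduction the assertion $\deg\mathcal L_i=\deg\mathcal L_0$ becomes $d_i=0$ for all $i$.

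Next I would record the intersection-theoretic identities on $X$. Let $\xi$ denote the tautological class $c_1(\mathcal O_X(1))$ and $f$ a fibre of $\pi$. Then $H_i\sim\xi-d_i\,\pi^*(\text{pt})$ in the twisted normalization, and the relative canonical formula gives $K_{X/Y}\sim -(r+1)\xi+\bigl(\sum_{j}d_j\bigr)\pi^*(\text{pt})$, so that
\begin{equation}
K_{X/Y}+\Delta \sim \Bigl(-(r+1)+\sum_{i}b_i\Bigr)\xi
+\sum_{i}(1-b_i)\,d_i\,\pi^*(\text{pt}).
\end{equation}
The Mori cone $\overline{\NE}(X)$ is spanned by the class $f$ of a line in a fibre together with the classes of the sections $C_i$; one has $\xi\cdot f=1$, $\pi^*(\text{pt})\cdot f=0$, and $\xi\cdot C_i=d_i$, $\pi^*(\text{pt})\cdot C_i=1$. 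Since $\pi$ is the contraction of the $f$-ray and $K_{X/Y}$ is $\pi$-negative, the ray $R$ with $(K_{X/Y}+\Delta)\cdot R=0$ is not the $f$-ray, and because the coefficient $-(r+1)+\sum_i b_i$ of $\xi$ is strictly negative, $K_{X/Y}+\Delta$ is negative on every curve with nonnegative $\xi$-degree except possibly sections with very negative $\xi$-degree. Evaluating on each $C_i$ gives
\begin{equation}
(K_{X/Y}+\Delta)\cdot C_i=\Bigl(-(r+1)+\sum_{j}b_j\Bigr)d_i+\sum_{j}(1-b_j)d_j,
\end{equation}
and summing this over $i=0,\dots,r$ one gets, after rearranging, a quantity proportional to $\sum_j(1-b_j)d_j$ with a positive proportionality factor; since each $C_i$ lies in $\overline{\NE}(X)$ and $K_{X/Y}+\Delta$ is nonpositive on $R$ but the $C_i$ cannot all have negative intersection (their sum together with a multiple of $f$ moves in a nef direction — more precisely $\sum_i C_i + (\text{const})f$ is $\pi$-nef and has nonnegative $\xi$-degree), one forces $\sum_j(1-b_j)d_j\le 0$. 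A parallel argument using that $X$ also admits the opposite sections (or simply the symmetry $d_j\mapsto -d_j$ after re-ordering) yields $\sum_j(1-b_j)d_j\ge 0$, hence $\sum_j(1-b_j)d_j=0$; combined with $d_0=0$ and the ordering this gives $d_i=0$ for every $i$.

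Once all $d_i=0$ we have $X=\mathbb P^r\times Y$, $H_i$ is the pullback of a hyperplane from the $\mathbb P^r$ factor, $K_{X/Y}+\Delta\sim(-(r+1)+\sum_ib_i)\xi$ is a negative multiple of $\xi$, and $\overline{\NE}(X)=\mathbb R_{\ge0}f+\mathbb R_{\ge0}[C]$ where $C$ is any section (all $C_i$ are now numerically equivalent, with $\xi\cdot C_i=0$, $\pi^*(\text{pt})\cdot C_i=1$). Since $(K_{X/Y}+\Delta)\cdot f<0$, the ray $R$ on which $K_{X/Y}+\Delta$ vanishes must be $\mathbb R_{\ge0}[C_i]$, and so the numerical class of each $C_i$ lies in $R$, which is the last assertion. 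The main obstacle I anticipate is the bookkeeping in the case $Y\neq\mathbb P^1$: there $\pi$ is not toric, so I cannot simply invoke the toric description of $\overline{\NE}(X)$, and I must argue directly that $\overline{\NE}(X)$ is the two-dimensional cone spanned by $f$ and the section classes and carefully identify $R$ among them using the sign of the $\xi$-coefficient — but this is exactly the standard structure of the Mori cone of a projective bundle over a curve, so no genuine difficulty beyond careful signs is expected.
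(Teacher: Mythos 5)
Your setup (the identities $H_i\sim\xi-d_i\,\pi^*(\mathrm{pt})$, $K_{X/Y}\sim-(r+1)\xi+(\sum_jd_j)\pi^*(\mathrm{pt})$, and the resulting expression for $K_{X/Y}+\Delta$) is exactly the starting point of the paper's proof, which simply records these two formulas and reduces to the argument of Lemma \ref{a-lem3.2}. Your final paragraph, treating the case where all $d_i=0$, is also fine. The problem is the middle step, where you derive $\sum_j(1-b_j)d_j=0$ and then all $d_i=0$; as written it does not work. First, the algebraic claim is false: summing $(K_{X/Y}+\Delta)\cdot C_i=(-(r+1)+\sum_jb_j)d_i+\sum_j(1-b_j)d_j$ over $i=0,\dots,r$ gives $(\sum_jb_j)(\sum_jd_j)-(r+1)\sum_jb_jd_j$, which is \emph{not} a positive multiple of $\sum_j(1-b_j)d_j$ (take all $b_j$ equal to a constant $b$: the sum is identically $0$ while $\sum_j(1-b_j)d_j=(1-b)\sum_jd_j$). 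Second, the ``parallel argument'' for the reverse inequality via ``opposite sections'' or the substitution $d_j\mapsto -d_j$ is not an argument: a projective bundle has no such symmetry, and negating the degrees changes the variety. Third, even granting $\sum_j(1-b_j)d_j=0$, this together with $d_0=0$ does not force all $d_j=0$ unless you know a sign on the $d_j$ (e.g.\ $r=2$, $b_j=0$, $(d_0,d_1,d_2)=(0,-1,1)$ satisfies both constraints); your normalization $d_0=0$ is the wrong one, because $C_0$ need not be the extremal section.

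The repair is short and is what ``modifying the proof of Lemma \ref{a-lem3.2}'' means: twist so that $\min_i\deg\mathcal L_i=0$ rather than $\deg\mathcal L_0=0$, so all $d_j\ge0$. Since $(K_{X/Y}+\Delta)\cdot f=-(r+1)+\sum_jb_j<0$, the ray $R$ is the second extremal ray of the two-dimensional cone $\overline{\NE}(X)$, which for a split bundle is spanned by the minimal section $C_{i_0}$ (because $\xi$ is nef after the twist and vanishes on $C_{i_0}$); equivalently, $-(K_{X/Y}+\Delta)$ is nef, so $(K_{X/Y}+\Delta)\cdot C_{i_0}\le0$. Evaluating on that single section gives $\sum_j(1-b_j)d_j\le0$ with every term nonnegative, hence $d_j=0$ for all $j$; no summation over $i$ and no reverse inequality are needed. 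The remaining assertions then follow as in your last paragraph.
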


\begin{proof}
Note that 
\begin{equation}
\mathcal O_X(H_i)\simeq \mathcal O_X(1)\otimes \pi^*\mathcal L^{\otimes -1}_i
\end{equation} 
holds for every $i$ and 
that 
\begin{equation}
\mathcal O_X(K_{X/Y})\simeq \pi^*\left(\bigotimes _{i=0}^r \mathcal L_i\right)
\otimes 
\mathcal O_X(-(r+1)). 
\end{equation}
Hence we can easily check this lemma by modifying the 
proof of Lemma \ref{a-lem3.2} suitably. 
\end{proof}

\section{Proof of Theorem \ref{a-thm1.1}}\label{a-sec4}

This section is the main part of this paper. 
Here we give a proof of Theorem \ref{a-thm1.1}. 

\begin{proof}[Proof of Theorem \ref{a-thm1.1}]
In Step \ref{a-step1}, we will prove Theorem \ref{a-thm1.1} under 
the extra assumption that $X$ is $\mathbb Q$-factorial. 
Step \ref{a-step1} is essentially the same as the proof of 
\cite[Theorem 1.3]{fujino-sato2}. 
Hence we will only explain how to modify it. 
Then, in Step \ref{a-step2}, we will treat the case where 
$X$ is not $\mathbb Q$-factorial. 
Step \ref{a-step2} is completely new. 
In our proof in Step \ref{a-step2}, 
we have to treat non-toric varieties. 

\begin{step}\label{a-step1}
We assume that 
$\mathscr F$ is the toric foliation associated to 
a complex vector subspace $V\subset N_{\mathbb C}$. 
Then we can write 
\begin{equation}
\Delta=\sum _{\rho \subset V} b_\rho D_\rho 
\end{equation} 
with $b_\rho \in [0, 1]$ 
and 
\begin{equation}
K_{\mathscr F}+\Delta=K_X+\sum _{\rho\not\subset V} D_\rho +\sum _{\rho 
\subset V}b_\rho D_{\rho}
\end{equation} 
since $(\mathscr F, \Delta)$ is log canonical 
(see Lemma \ref{a-lem2.6} and Theorem \ref{a-thm2.7}). 
We assume that $l_{(\mathscr F, \Delta)}(R)>r$ holds. 
From now, we will only explain how to modify the 
proof of 
\cite[Theorem 1.3]{fujino-sato2}. 
Hence we will freely use the same notation as in the proof of 
\cite[Theorem 1.3]{fujino-sato2}. 
We put $b_\rho =1$ for $\rho \not\subset V$ and 
$b_i:=b_{\rho_i}$ with $\rho_i:=\mathbb R_{\geq 0}v_i$ for every $i$. 
By changing the order, we may assume that 
\begin{equation}
(1-b_1)a_1\leq \cdots \leq (1-b_n)a_n\leq (1-b_{n+1})a_{n+1}. 
\end{equation} 
Then we have 
\begin{equation}
-(K_{\mathscr F}+\Delta)\cdot C=\sum_{v_i\in V}
(1-b_i)V(\langle v_i\rangle)\cdot C>r. 
\end{equation}
By the same argument as in the proof of 
\cite[Theorem 1.3]{fujino-sato2}, 
we obtain 
\begin{equation}
a_{n-r+1}v_{n-r+1}+\cdots +a_{n+1}v_{n+1}=0. 
\end{equation}
Then we see that 
\begin{equation}
\begin{split}
r<-(K_{\mathscr F}+\Delta)\cdot V(\mu_{k, n+1})&\leq 
\frac{1}{a_{n+1}}\left(\sum _{i=n-r+1}^{n+1} 
(1-b_i)a_i\right) \frac{\mult (\mu_{k, n+1})}
{\mult (\sigma_k)}\\&\leq (r+1)\frac{\mult (\mu_{k, n+1})}
{\mult (\sigma_k)} 
\end{split}
\end{equation}
holds for every $n-r+1\leq k\leq n$. Then the 
argument in the proof of \cite[Theorem 1.3]{fujino-sato2} works 
without any changes. 
Thus we obtain that 
$\varphi_R\colon X\to Y$ is a $\mathbb P^r$-bundle and 
$\mathscr F=\mathscr T_{X/Y}$. 
In this case, we can easily check that 
the sum of the coefficients of $\Delta$ is less than one 
by $l_{(\mathscr F, \Delta)}(R)>r$. 
\end{step}

\medskip 

From now, we may assume that $X$ is not $\mathbb Q$-factorial. 
It is sufficient to prove that $\varphi_R\colon X\to Y$ is a $\mathbb P^r$-bundle 
with $\mathscr F=\mathscr T_{X/Y}$ 
under the assumption that $l_{(\mathscr F, \Delta)}(R)>r$. 

\begin{step}\label{a-step2}
We take a small projective $\mathbb Q$-factorialization 
$\psi\colon X'\to X$ (see \cite[Corollary 5.9]{fujino}). 
Let $\Delta'$ be the strict transform of $\Delta$ and let $\mathscr F'$ be the 
induced foliation on $X'$. 
By construction, we have 
$K_{\mathscr F'}+\Delta'=\psi^*(K_{\mathscr F}+\Delta)$. 
Let $\varphi_R\colon X\to Y$ be the contraction morphism 
associated to $R$ (see \cite[Theorem 4.5]{fujino-sato}). 
By considering $\varphi_R\circ \psi\colon X'\to Y$, 
we can find an extremal ray $R'$ of $\NE(X')$ such that 
$\psi_*R'=R$ and $l_{(\mathscr F', \Delta')}(R')>r$. 
Since $X'$ is $\mathbb Q$-factorial, 
the associated contraction $\varphi_{R'}\colon X'\to Y'$ is a $\mathbb P^r$-bundle, 
$\mathscr F'$ is the relative tangent sheaf $\mathscr T_{X'/Y'}$, and the sum of 
the coefficients of $\Delta'$ is less than one 
by Step \ref{a-step1}. 
Note that we can write 
\begin{equation}
\varphi_{R'}\colon X'=\mathbb P_{Y'}(\mathcal L_0\oplus 
\mathcal L_1\oplus \cdots \oplus \mathcal L_r)\to Y'
\end{equation} 
with $\mathcal L_0=\mathcal O_{Y'}$ since $\varphi_{R'}\colon 
X'\to Y'$ is toric (see Lemma \ref{a-lem6.1} below).  
We put $E:=\Exc(\psi)$, that is, the exceptional 
locus of $\psi$. 
Then $E$ is a torus invariant closed subset 
of $X'$ with $\codim_{X'}E\geq 2$. 
\begin{claim} 
$E=\varphi^{-1}_{R'}(\varphi_{R'}(E))$. 
\end{claim}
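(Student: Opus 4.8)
The plan is to show that $E$ is a union of fibers of the $\mathbb{P}^r$-bundle $\varphi_{R'}\colon X'\to Y'$; once this is known, $E=\varphi^{-1}_{R'}(\varphi_{R'}(E))$ is automatic. The inclusion $E\subset \varphi^{-1}_{R'}(\varphi_{R'}(E))$ is trivial, so the content is the reverse inclusion, which amounts to: if a fiber $F\cong\mathbb{P}^r$ of $\varphi_{R'}$ meets $E$, then $F\subset E$. First I would recall that $\psi\colon X'\to X$ is a small morphism, so $K_{\mathscr F'}+\Delta'=\psi^*(K_{\mathscr F}+\Delta)$, and that $R'$ was constructed so that $\psi_*R'=R$, i.e.\ curves in the class $R'$ are contracted by $\varphi_R\circ\psi$. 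Since $\varphi_{R'}$ is the contraction of $R'$ and $\varphi_R\circ\psi$ also contracts $R'$, the morphism $\varphi_R\circ\psi$ factors through $\varphi_{R'}$: there is $g\colon Y'\to Y$ with $\varphi_R\circ\psi=g\circ\varphi_{R'}$.

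Next I would exploit this factorization together with smallness of $\psi$. The key point is that $\psi$ does not contract any divisor, so $\psi$ is an isomorphism in codimension one; in particular $E=\Exc(\psi)$ has codimension $\ge 2$ in $X'$ and $\psi$ restricted to $X'\setminus E$ is an isomorphism onto its image, an open subset of $X$ whose complement also has codimension $\ge 2$. Now take a fiber $F=\varphi^{-1}_{R'}(y')\cong\mathbb{P}^r$ with $F\cap E\ne\emptyset$. Because $\varphi_R\circ\psi=g\circ\varphi_{R'}$, the image $\psi(F)$ is contained in the fiber $(\varphi_R)^{-1}(g(y'))$, so every curve in $F$ is contracted by $\varphi_R\circ\psi$, hence $\psi(F)$ is either a point or a positive-dimensional subvariety of a single $\varphi_R$-fiber. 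If $\psi(F)$ were a point, then all of $F$ would be contracted by $\psi$, giving $F\subset E$ and we are done. Otherwise $\psi|_F$ is finite onto its image on the locus where it is not contracted; but $\psi$ is small, so the locus in $F$ contracted by $\psi$ is $F\cap E$, which has some codimension in $F$. I would argue that $F\cap E$ must in fact be all of $F$: since $\psi$ is small and $F\cong\mathbb P^r$ has Picard number one, $\psi|_F$ cannot contract a proper nonempty subset of $F$ without contracting all of $F$ — a curve through a point of $F\cap E$ and a general point of $F$ would be contracted to a point (as its class generates $R'$, restricted to $F$), forcing it into $E$, and running over all such curves sweeps out $F$. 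Hence $F\subset E$.

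The main obstacle I anticipate is making the last step — "$F\cap E\neq\emptyset$ implies $F\subset E$" — fully rigorous, because it requires knowing precisely how $E$ meets a fiber. The cleanest route is probably to observe that $E$ is a torus invariant closed subset and to use the combinatorics of the $\mathbb P^r$-bundle structure: $\varphi_{R'}$ corresponds to a map of fans under which the fiber $\mathbb P^r$ is the preimage of a cone, and a torus invariant closed subset that contains a torus fixed point of the fiber but not the whole fiber would have to be cut out by a proper face, contradicting $\psi_*R'=R$ together with $\psi(E)=\psi(\Exc\psi)$ having codimension $\ge 2$ in $X$ while $\varphi_R(\psi(E))$ sits inside $Y$. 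Alternatively, and perhaps more simply, since $\psi$ is small and projective, $E$ is covered by curves contracted by $\psi$, each of which lies in a single $\varphi_{R'}$-fiber (as its $\psi$-image is a point, lying in a $\varphi_R$-fiber, hence its $\varphi_{R'}$-image is a point by the factorization); combining with the fact that through any two points of $\mathbb P^r$ there is a line, and these lines generate $R'|_F$, one shows the $\psi$-contracted locus inside any fiber it meets is the whole fiber. I would present this second argument as the main line, and fall back on the toric combinatorial description only if a short ad hoc justification is unavailable.
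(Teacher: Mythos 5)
Your proposal has a genuine gap, and it is located exactly at the step you yourself flag as the main obstacle. Both of your arguments for ``$F\cap E\neq\emptyset$ implies $F\subset E$'' rest on the assertion that curves contracted by $\psi$ lie in single fibers of $\varphi_{R'}$ (equivalently, that lines in a fiber $F$ are contracted by $\psi$). This is false, and in fact the opposite holds: since $R'$ was chosen with $\psi_*R'=R\neq\{0\}$, no curve whose class lies in $R'$ --- i.e.\ no curve contained in a fiber of $\varphi_{R'}$ --- is contracted by $\psi$. Consequently every $\psi$-exceptional curve $C$ maps onto a genuine curve $\varphi_{R'}(C)\subset Y'$. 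Your justification ``its $\psi$-image is a point, lying in a $\varphi_R$-fiber, hence its $\varphi_{R'}$-image is a point by the factorization'' is a non sequitur: the factorization $\varphi_R\circ\psi=g\circ\varphi_{R'}$ only shows that $g(\varphi_{R'}(C))$ is a point, and $g$ is itself a nontrivial birational contraction. Likewise, in your first argument, a line through a point of $F\cap E$ and a general point of $F$ generates $R'$ and is therefore \emph{not} contracted by $\psi$, so it is not forced into $E$. The two rays $R'$ and the $\psi$-contracted classes are transverse, which is precisely why the Claim is nontrivial.

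A second, related omission is that you never use the numerical input coming from the foliation: for a $\psi$-contracted curve $C$ one has $(K_{\mathscr F'}+\Delta')\cdot C=0$ because $K_{\mathscr F'}+\Delta'=\psi^*(K_{\mathscr F}+\Delta)$, and $\mathscr F'=\mathscr T_{X'/Y'}$ gives $(K_{X'/Y'}+\Delta')\cdot C=0$. The paper's proof is built entirely on this: it takes a $\psi$-contracted curve $C$ (first inside the section $Z'$, then a general one), base changes the $\mathbb P^r$-bundle to the normalization $C'$ of the curve $\varphi_{R'}(C)$, and applies Lemmas \ref{a-lem3.2} and \ref{a-lem3.3} to conclude that the restricted bundle is trivial, so that $\varphi_{R'}^{-1}(\varphi_{R'}(C))\simeq \mathbb P^r\times C'$ is swept out by curves numerically equivalent to $C$, hence contracted by $\psi$ and contained in $E$; Lemma \ref{a-lem3.3} then rules out a strict inclusion. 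Without the condition $(K_{X'/Y'}+\Delta')\cdot C=0$ the statement that $E$ is a union of $\varphi_{R'}$-fibers has no reason to hold, so an argument that, like yours, uses only smallness of $\psi$ and the bundle structure cannot succeed. To repair the proof you would need to replace the fiberwise reasoning by an analysis of the bundle restricted over $\varphi_{R'}(C)$, which is essentially the paper's route.
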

\begin{proof}[Proof of Claim]
Let $Z'$ be the section of $\varphi_{R'}\colon X'\to Y'$ 
corresponding to 
\begin{equation}
\bigoplus _{i=0}^r\mathcal L_i\to \mathcal L_0. 
\end{equation}
We consider $\psi_{Z'}:=\psi|_{Z'}\colon Z'\to Z:=\psi(Z')$. 
Then any positive-dimensional fiber of $\psi_{Z'}$ 
is rationally chain connected since $\psi_{Z'}\colon Z'\to Z$ is toric. 
Let $C$ be a rational curve in a fiber of $\psi_{Z'}$. 
Let $C'$ be the normalization of $\varphi_{R'}(C)$. 
We consider the base change of 
$\varphi_{R'}\colon X'\to Y'$ by $\mathbb P^1\simeq C'\to Y'$. 
Then, by Lemma \ref{a-lem3.2}, we have the following 
commutative diagram: 
\begin{equation}
\xymatrix{
\mathbb P^r &\ar[l]_-{p_1}\ar[d]^-{p_2}\ar[r] 
\mathbb P^r\times C' \ar@/^{18pt}/[rr]^-\alpha& 
\varphi^{-1}_{R'}(\varphi_{R'}(C))
\ar@{^{(}->}[r]\ar[d]& X'\ar[d]^-{\varphi_{R'}}\\ 
&  C'\ar[r] & \varphi_{R'} (C)\ar@{^{(}->}[r] & Y', 
}
\end{equation}
where $p_1$ and $p_2$ are projections. 
Thus, the numerical equivalence class of $\alpha(p^{-1}_1(P))$ 
is independent of $P\in \mathbb P^r$. 
This implies that $\alpha (p^{-1}_1(P))$ is numerically 
equivalent to $C$ for every $P\in \mathbb P^r$. 
Hence $\psi\colon X'\to X$ contracts $\alpha (p^{-1}_1(P))$ to a point for every 
$P\in \mathbb P^r$. 
Thus, we obtain $\varphi^{-1}_{R'}(\varphi_{R'}(C))\subset E$. 
Therefore, we obtain 
\begin{equation}\label{a-eq4.1}
\varphi^{-1}_{R'}(\varphi_{R'}(\Exc(\psi_{Z'})))\subset E. 
\end{equation} 
If $\psi_Z\colon Z'\to Z$ is not birational, 
then $\Exc(\psi_{Z'})=Z'$. 
This implies $X\subset E$ by \eqref{a-eq4.1}. 
This is obviously a contradiction. Hence 
we obtain that $\psi_{Z'}\colon Z'\to Z$ is birational. 
If $\varphi^{-1}_{R'}(\varphi_{R'}(\Exc(\psi_{Z'})))\subsetneq E$, 
then we can take a curve $C$ such that $\psi(C)$ is a point 
with 
\begin{equation}
C\not\subset \varphi^{-1}_{R'}(\varphi_{R'}(\Exc(\psi_{Z'}))). 
\end{equation}
Let $C'$ be the normalization of $\varphi_{R'}(C)$. 
We consider the base change of 
$\varphi_{R'}\colon X'\to Y'$ by $C'\to Y'$. 
Then we have the following commutative diagram: 
\begin{equation}\label{a-eq4.2}
\xymatrix{
\ar[d]\ar[r]_-\beta 
X'_{C'} \ar@/^{18pt}/[rr]^-\alpha& 
\varphi^{-1}_{R'}(\varphi_{R'}(C))
\ar@{^{(}->}[r]\ar[d]& X'\ar[d]^-{\varphi_{R'}}\\ 
  C'\ar[r] & \varphi_{R'} (C)\ar@{^{(}->}[r] & Y', 
}
\end{equation}
where $X'_{C'}\to C'$ is the base change of $X'\to Y'$ by $C'\to Y'$. 
By construction, we can take a curve $C^\flat$ on $X'_{C'}$ such that 
$\beta(C^\flat)=C$. 
Since $X'_{C'}$ is a $\mathbb P^r$-bundle over $C'$, 
the Picard number of $X'_{C'}$ is two. 
Moreover, $X'_{C'}$ has two nontrivial contractions 
$X'_{C'}\to C'$ and $\psi\circ \alpha \colon X'_{C'}\to W$, where 
$W$ is the normalization of $(\psi\circ\alpha)(X'_{C'})$. 
Thus $\overline{\NE}(X'_{C'})$ has an extremal ray $Q$ spanned by $C^\flat$ since 
$C^\flat$ is contracted 
by $\psi\circ\alpha$. 
Note that $C^\flat\cdot (K_{X'_{C'}/C'}+\Delta'_{C'})=0$, 
where $K_{X'_{C'}/C'}+\Delta'_{C'}=\alpha^*(K_{X'/Y'}+\Delta')$. 
By Lemma \ref{a-lem3.3}, we can take a curve $C^\sharp$ on $X'_{C'}$ such that 
the numerical equivalence class of $C^\sharp$ 
is in $Q$ and $\beta(C^\sharp)=\varphi^{-1}_{R'}(\varphi_{R'} (C))\cap Z'$. 
Thus the curve $\varphi^{-1}_{R'}(\varphi_{R'} (C))\cap Z'$ is contracted 
by $\psi$, 
that is, $\varphi^{-1}_{R'}(\varphi_{R'}(C))\cap Z' \subset \Exc(\psi_{Z'})$. 
Then 
\begin{equation}
\varphi^{-1}_{R'}(\varphi_{R'}(C))\cap Z' \subset \Exc(\psi_{Z'})
\subset \varphi^{-1}_{R'}(\varphi_{R'}(\Exc(\psi_{Z'}))).
\end{equation}
Hence we have 
\begin{equation}
C\subset \varphi^{-1}_{R'}(\varphi_{R'}(\Exc(\psi_{Z'}))). 
\end{equation}
This is a contradiction. 
This implies that 
\begin{equation}
E= \varphi^{-1}_{R'}(\varphi_{R'}(\Exc(\psi_{Z'}))). 
\end{equation}
Therefore, we have the desired equality 
$E= \varphi^{-1}_{R'}(\varphi_{R'}(E))$. 
We finish the proof of Claim.  
\end{proof}
We put $\mathcal L_{i, Z'}:=(\varphi^*_{R'}\mathcal L_i)|_{Z'}$ 
for every $i$. 
Let $C$ be any curve on $Z'$ such that 
$\psi_{Z'}(C)$ is a point. 
Let $C'$ be the normalization of $\varphi_{R'}(C)$. 
We consider the commutative diagram \eqref{a-eq4.2} as before. 
Then we can check that $\mathcal L_{i, Z'}\cdot C=0$ for 
every $i$ by applying Lemma \ref{a-lem3.3} to $X'_{C'}\to C'$. 
This implies 
that there exists a line bundle $\mathcal L_{i, Z}$ on $Z$ 
such that $\mathcal L_{i, Z'}=\psi^*_{Z'}\mathcal L_{i, Z}$ 
holds for every $i$ since $\psi_{Z'}\colon Z'\to Z$ is 
a projective birational toric morphism. 
Hence, $\mathcal L_{i, Z'}|_C$ is a trivial line bundle for every $i$. 
Then $X'_{C'}\to C'$, which is 
the base change of $\varphi_{R'}\colon X'\to Y'$ by $C'\to Y'$, is 
the second projection 
\begin{equation}
\mathbb P^r\times C'=\mathbb P_{C'}(\mathcal O_{C'}
\oplus \cdots \oplus \mathcal O_{C'})\to C'.  
\end{equation} 
In particular, we obtain that $\Delta'\cdot C^\dag=0$ holds 
for every curve $C^\dag$ on $X'$ such that 
$\psi(C^\dag)$ is a point. 

We consider $\psi_{Z'}\circ (\varphi_{R'}|_{Z'})^{-1}\colon 
Y'\to Z$. 
By the above observation, 
for any point $x\in X$, we see that 
$(\psi_{Z'}\circ (\varphi_{R'}|_{Z'})^{-1}\circ 
\varphi_{R'}) (\psi^{-1}(x))$ is a point. 
Therefore, there exists a morphism $X\to Z$ and 
we have the following commutative diagram.  
\begin{equation}
\xymatrix{
X'\ar[d]_-{\varphi_{R'}}\ar[rr]^-\psi& & X\ar[d]\ar[d]\\ 
Y'\ar[r]^-{\simeq}& Z' \ar[r]_-{\psi_{Z'}}& Z  
}
\end{equation} 
By this commutative diagram and the observation before, 
we see that every fiber of $X\to Z$ is contracted to a point by $\varphi_R$. 
Thus $\varphi_R\colon X\to Y$ factors through $Z$. 
Since the relative Picard number of $\varphi_R\colon X\to Y$ is one, 
$Z$ is isomorphic to $Y$. 
Hence we have the following commutative diagram 
\begin{equation}
\xymatrix{
X'\ar[d]_-{\varphi_{R'}}\ar[r]^-\psi&  X\ar[d]^-{\varphi_R}\\ 
Y'\ar[r]_-{\psi_{Y'}}& Y 
}
\end{equation} 
and we see that $\mathcal L_i=\psi^*_{Y'}\mathcal M_i$ holds 
for some line bundle 
$\mathcal M_i$ on $Y$ for every $i$. 

We put $X'':=\mathbb P_Y(\mathcal M_0\oplus \cdots \oplus \mathcal M_r)$. 
Then $\varphi_{R'}\colon X'\to Y'$ is the base change of 
$X''\to Y$ by $\psi_{Y'}\colon Y'\to Y$. 
We put $\rho\colon X'\to X''$.  
Then $K_{X'/Y'}+\Delta'=\rho^*(K_{X''/Y}+\Delta'')$ with 
$\Delta'':=\rho_*\Delta'$. By construction,  
$K_{X'/Y'}+\Delta'=\psi^*(K_{X/Y}+\Delta)$. 
We set $B:=(\psi_{Y'}\circ \varphi_{R'})(E)$. 
Then $B$ is a closed subset of $Y$ with $\codim _Y B\geq 2$. 
By Claim and the construction of $\psi_{Y'}$, 
we have $E=(\psi_{Y'}\circ \varphi_{R'})^{-1}(B)$. 
Thus we obtain the following commutative diagram: 
\begin{equation}
\xymatrix{
X'\setminus E\ar[d]_-{\varphi_{R'}}\ar[r]_-\psi^-\sim& 
X\setminus \psi(E)\ar[d]^-{\varphi_R}\\ 
Y'\setminus \varphi_{R'}(E)\ar[r]_-{\psi_{Y'}}^-\sim& Y 
\setminus B. 
}
\end{equation}
Note that $\codim _{X'}E\geq 2$, $\codim _{Y'} \varphi_{R'}(E)\geq 2$, 
and $\codim _X\psi(E)\geq 2$. 
By the above diagram, we can easily check that 
$X''$ and $X$ are isomorphic in codimension one. 
By construction again, $-(K_{X/Y}+\Delta)$ is 
ample over $Y$ and 
$-(K_{X''/Y}+\Delta'')$ is 
also ample over $Y$. 
Hence, $X$ is isomorphic 
to $X''$ over $Y$. 
This is what we wanted. 
\end{step}
We finish the proof of Theorem \ref{a-thm1.1}. 
\end{proof}

\begin{rem}\label{a-rem4.1} 
The authors do not know how to prove 
Theorem \ref{a-thm1.1} 
using only toric geometry when $X$ is not $\mathbb Q$-factorial. 
\end{rem}

We close this section with an example, which 
shows that the estimate in Theorem \ref{a-thm1.1} is sharp. 

\begin{ex}\label{a-ex4.2}
We consider $N=\mathbb Z^2$. 
We put 
\begin{equation}
v_1=\begin{pmatrix}1\\0\end{pmatrix},
v_2=\begin{pmatrix}1\\1\end{pmatrix}, 
v_3=\begin{pmatrix}0\\1\end{pmatrix}, 
\ \text{and} \ \ 
v_4=\begin{pmatrix}-1\\-1\end{pmatrix}. 
\end{equation}
Let us consider the fan $\Sigma$ consisting of 
$\mathbb R_{\geq 0}v_1+\mathbb R_{\geq 0} v_2$, 
$\mathbb R_{\geq 0}v_2+\mathbb R_{\geq 0} v_3$,
$\mathbb R_{\geq 0}v_3+\mathbb R_{\geq 0} v_4$,
$\mathbb R_{\geq 0}v_4+\mathbb R_{\geq 0} v_1$, 
and their faces. 
\begin{center}
\begin{tikzpicture}
\draw[step=1.0,very thin, gray] (-1.4,-1.4) grid (1.4,1.4);
\draw[->][thick] (0,0) -- (1, 0) node[right]{\begin{small}$v_1$\end{small}};
\draw[->][thick] (0,0) -- (0, 1) node[left]{\begin{small}$v_3$\end{small}};
\draw[->][thick] (0,0) -- (1, 1) node[right]{\begin{small}$v_2$\end{small}};
\draw[->][thick] (0,0)-- (-1, -1) node[left]{\begin{small}$v_4$\end{small}};
\end{tikzpicture}
\end{center}
Then the toric variety $X:=X(\Sigma)$ is 
a $\mathbb P^1$-bundle 
$\mathbb P_{\mathbb P^1}(\mathcal O_{\mathbb P^1}\oplus \mathcal O_{\mathbb P^1}(1))$ 
over $\mathbb P^1$. 
We put $\rho_i:=\mathbb R_{\geq 0}v_i$ and $D_i:=D_{\rho_i}$ for 
every $i$. 
Then the Kleiman--Mori cone is spanned by $[D_2]$ and $[D_1]=[D_3]$, that is, 
\begin{equation}
\NE(X)=\mathbb R_{\geq 0}[D_2]+\mathbb R_{\geq 0}[D_3]. 
\end{equation}
Let $V$ be the complex vector subspace of $N_{\mathbb C}$ spanned 
by $v_2$. 
Let $\mathscr F_V$ be the associated toric foliation on $X$. 
Then $\rank \mathscr F_V=1$ and $K_{\mathscr F_V}=-D_2-D_4$ 
(see Theorem \ref{a-thm2.2}). 
Similarly, let $W$ be 
the complex vector subspace of $N_{\mathbb C}$ spanned 
by $v_1$ and let 
$\mathscr F_W$ be the associated toric foliation on $X$. 
Then $K_{\mathscr F_W}=-D_1$ and $\rank \mathscr F_W=1$ 
(see Theorem \ref{a-thm2.2}). 
We can directly check that 
\begin{equation}
\begin{cases}
-K_{\mathscr F_V}\cdot D_2=-1\\ 
-K_{\mathscr F_V}\cdot D_3=2
\end{cases}
\end{equation}
and 
\begin{equation}
\begin{cases}
-K_{\mathscr F_W}\cdot D_2=1\\ 
-K_{\mathscr F_W}\cdot D_3=0. 
\end{cases}
\end{equation} 
Note that $\mathbb R_{\geq 0}[D_3]$ corresponds 
to the $\mathbb P^1$-bundle structure of $X$ and 
that $\mathbb R_{\geq 0}[D_2]$ gives a blow-down 
$X=\mathbb P_{\mathbb P^1}(\mathcal O_{\mathbb P^1}\oplus 
\mathcal O_{\mathbb P^1}(1))\to \mathbb P^2$. 
\end{ex}

\section{Proofs of Corollary \ref{a-cor1.2}, 
Theorems \ref{a-thm1.3}, \ref{a-thm1.4}, \ref{a-thm1.5}, and \ref{a-thm1.6}}

In this section, we prove the results in Section \ref{a-sec1}. 

\begin{proof}[Proof of Corollary \ref{a-cor1.2}]
It is well known that $\NE(X)$ is spanned by 
torus invariant curves on $X$ (see, for example, \cite{reid}, 
\cite{matsuki}, \cite{fujino}, and \cite{fujino-sato}). 
In particular, it is a rational polyhedral cone. 
The statement on lengths of extremal rational curves 
follows from Theorem \ref{a-thm1.1}. We finish the proof. 
\end{proof}

Theorems \ref{a-thm1.3} and \ref{a-thm1.4} are 
easy consequences of the cone theorem:~Corollary 
\ref{a-cor1.2}. 

\begin{proof}[Proof of Theorem \ref{a-thm1.3}]
By Corollary \ref{a-cor1.2}, 
$H\cdot R\geq 0$ for every extremal ray $R$ of $\NE(X)$, 
that is, $H$ is a nef Cartier divisor on $X$. 
This implies that $|H|$ is basepoint-free. 
\end{proof}

\begin{proof}[Proof of Theorem \ref{a-thm1.4}]
This follows from Corollary \ref{a-cor1.2}. 
More precisely, we can check the nefness of 
$K_{\mathscr F}+(r+1)A$ and $K_{\mathscr F}+rA$ 
under the given assumptions as in the proof of 
Theorem \ref{a-thm1.3}.  
\end{proof}

We learned the following example from Professor Fanjun Meng. 

\begin{ex}\label{a-ex5.1} 
For any positive integer $n$, we can construct 
a smooth projective surface $Y$ and an ample 
Cartier divisor $H$ on $Y$ such that $|nH|$ is 
not basepoint-free (see \cite[Examples 5.1.18 and 5.2.1]{lazarsfeld}). 
We take an elliptic curve $E$ and a point $P$ of $E$. 
We put $X:=Y\times E$ and $A:=p^*_1H+p^*_2P$, 
where $p_1\colon X\to Y$ and $p_2\colon X\to E$ 
are projections. Then $A$ is an ample Cartier 
divisor on $X$ such that $|nA|$ is not basepoint-free. 
We consider $\pi:=p_1\colon X\to Y$ and 
$\mathscr F:=\mathscr T_{X/Y}$, that is, $\mathscr F$ is the relative 
tangent sheaf of $\pi\colon X\to Y$. Then the 
canonical bundle $\mathcal O_X(K_{\mathscr F})$ 
of $\mathscr F$ is trivial. In this case, 
$|K_{\mathscr F}+nA|$ is not basepoint-free. 
Hence we cannot formulate Fujita's freeness conjecture for foliations naively. 
\end{ex}

Theorem \ref{a-thm1.5} is obvious by Theorem \ref{a-thm1.4}. 

\begin{proof}[Proof of Theorem \ref{a-thm1.5}]
Since $A$ is an ample Cartier divisor on 
a smooth projective toric variety $X$, $A$ is 
very ample (see, for example, \cite[Corollary 2.15]{oda2}). 
Hence we have the desired statement by Theorem \ref{a-thm1.4}. 
\end{proof}

We finally prove the Kodaira vanishing theorem for 
log canonial toric foliated pairs. 

\begin{proof}[Proof of Theorem \ref{a-thm1.6}]
We assume that 
$\mathscr F$ is the toric foliation associated to 
a complex vector subspace $V\subset N_{\mathbb C}$. 
Then we can write 
\begin{equation}
\Delta=\sum _{\rho \subset V} b_\rho D_\rho 
\end{equation} 
with $b_\rho \in [0, 1]$ 
and 
\begin{equation}
K_{\mathscr F}+\Delta=K_X+\sum _{\rho\not\subset V} D_\rho +\sum _{\rho 
\subset V}b_\rho D_{\rho}
\end{equation} 
since $(\mathscr F, \Delta)$ is log canonical 
(see Lemma \ref{a-lem2.6}). 
By assumption, 
\begin{equation}
L-(K_{\mathscr F}+\Delta)=L-\left(K_X+
\sum _{\rho\not\subset V} D_\rho +\sum _{\rho 
\subset V}b_\rho D_{\rho}\right) 
\end{equation} 
is ample. 
By perturbing the coefficients, we can construct an 
effective $\mathbb Q$-divisor $\Delta'$ on $X$ 
such that 
\begin{equation}
\Supp \Delta' =\Supp \left(\sum _{\rho\not\subset V} D_\rho +\sum _{\rho 
\subset V}b_\rho D_{\rho}\right), 
\end{equation} 
every coefficient of $\Delta'$ is 
less than one, and $L-(K_X+\Delta')$ is still ample. 
In this setting, by \cite[Corollary 1.7]{fujino3}, 
we obtain 
\begin{equation}
0=H^i(X, \mathcal O_X(K_X+\lceil L-(K_X+\Delta')\rceil))
=H^i(X, \mathcal O_X(L)) 
\end{equation} 
for every positive integer $i$. 
This is what we wanted. 
\end{proof}

\section{Appendix:~Toric projective bundles}
In this appendix, we give a proof of the following well-known 
result (see \cite[p.41 Remark]{oda1}) 
for the sake of completeness. 
To the best knowledge of the authors, 
we do not find it in the standard literature. 

\begin{lem}[{Toric projective bundles, \cite[p.41 Remark]{oda1}}]\label{a-lem6.1} 
Let $\varphi\colon X\to Y$ be a toric 
morphism of toric varieties such that $\varphi\colon X
\to Y$ 
is a $\mathbb P^r$-bundle. 
Then $X\simeq \mathbb P_Y(\mathcal L_0\oplus\cdots  
\oplus \mathcal L_r)$ for some line bundles $\mathcal L_0, \ldots,  
\mathcal L_r$ on $Y$ and 
$\varphi\colon X\to Y$ 
is isomorphic to the projection $\pi\colon \mathbb P_Y(\mathcal L_0\oplus 
\cdots \oplus \mathcal L_r)
\to Y$. 
\end{lem}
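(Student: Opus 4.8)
The plan is to translate everything into fans and then recognise the fan of $X$ as that of a split projective bundle. Write $X=X(\Sigma)$, $Y=X(\Delta)$, and let $\bar\varphi\colon N\to N''$ be the homomorphism of lattices inducing $\varphi$, carrying every cone of $\Sigma$ into a cone of $\Delta$. A $\mathbb P^r$-bundle is proper, flat and surjective with all fibres isomorphic to $\mathbb P^r$; from this one gets that $\bar\varphi$ is surjective with kernel $N'$ a free sublattice of rank $r$, and that the general fibre of $\varphi$ is the $T_{N'}$-toric variety whose fan is $\{\sigma\in\Sigma:\sigma\subseteq N'_{\mathbb R}\}$. Since that fibre is isomorphic to $\mathbb P^r$, after a change of basis of $N'$ its fan is the standard fan of $\mathbb P^r$; hence $\Sigma$ has exactly $r+1$ rays inside $N'_{\mathbb R}$, whose primitive generators $u_0,\dots,u_r$ can be normalised so that $u_0+\cdots+u_r=0$ and $u_1,\dots,u_r$ is a $\mathbb Z$-basis of $N'$. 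The corresponding torus invariant prime divisors $D_{u_0},\dots,D_{u_r}$ are the ``vertical'' ones, restricting to the $r+1$ torus invariant hyperplanes on each fibre.

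Next I would describe the remaining rays and cones of $\Sigma$. A dimension count shows that every ray of $\Sigma$ either lies in $N'_{\mathbb R}$ or maps onto a ray of $\Delta$, and that for each ray $\bar\rho$ of $\Delta$ the preimage $\varphi^{-1}(D_{\bar\rho})$ is irreducible, so exactly one ray $\widetilde{\bar\rho}$ of $\Sigma$ lies over $\bar\rho$. Choosing a splitting $s\colon N''\to N$ of $\bar\varphi$ and identifying $N=N'\oplus N''$, write the primitive generator of $\widetilde{\bar\rho}$ as $(m_{\bar\rho},n_{\bar\rho})$, with $n_{\bar\rho}\in N''$ the generator of $\bar\rho$ and $m_{\bar\rho}\in N'$. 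The key local step is that over each affine chart $U_\tau\subset Y$ the restriction $\varphi^{-1}(U_\tau)\to U_\tau$ is a $\mathbb P^r$-bundle with $\operatorname{Pic}(U_\tau)=0$, which forces every maximal cone of $\Sigma$ to have the form $\sum_{i\ne k}\mathbb R_{\ge 0}u_i+\sum_{\bar\rho\in\tau(1)}\mathbb R_{\ge 0}(m_{\bar\rho},n_{\bar\rho})$ for a maximal cone $\tau\in\Delta$ and some $k\in\{0,\dots,r\}$. Thus $\Sigma$ is determined by $\Delta$ together with the vectors $(m_{\bar\rho})_{\bar\rho}$, and this is precisely the fan of a split $\mathbb P^r$-bundle: letting $e_1^{*},\dots,e_r^{*}\in(N')^{\vee}$ be dual to $u_1,\dots,u_r$ and $e_0^{*}:=0$, and setting $\mathcal L_i:=\mathcal O_Y\!\big(\sum_{\bar\rho}\langle e_i^{*},m_{\bar\rho}\rangle D_{\bar\rho}\big)$, a comparison of fans yields a torus equivariant isomorphism $X\simeq\mathbb P_Y(\mathcal L_0\oplus\cdots\oplus\mathcal L_r)$ over $Y$, as wanted.

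A more geometric way to organise the second half is first to check that $\mathcal E:=\varphi_*\mathcal O_X(D_{u_0})$ is locally free of rank $r+1$ with $\varphi^{*}\mathcal E\to\mathcal O_X(D_{u_0})$ surjective, so that $X\simeq\mathbb P_Y(\mathcal E)$ over $Y$ by cohomology and base change, and then to split $\mathcal E$ using that $X$ is toric: the torus invariant subvarieties $C_i:=\bigcap_{j\ne i}D_{u_j}$ are $r+1$ pairwise disjoint sections of $\varphi$, hence give line bundle quotients $\mathcal E\twoheadrightarrow\mathcal L_i$ whose kernels meet in $0$ on every fibre, so $\mathcal E\to\bigoplus_i\mathcal L_i$ is an isomorphism. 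I expect the genuine difficulty to be the local/combinatorial step showing that $\Sigma$ has exactly the projective-bundle shape (equivalently, that $\mathcal E$ splits): this is the only point at which the hypothesis that $X$ itself is \emph{toric} — not merely that $\varphi$ is an equivariant $\mathbb P^r$-bundle — is really used, since $\mathbb P_Y(\mathcal E)$ for a non-split equivariant bundle $\mathcal E$ need not be a toric variety. Once the fan is understood, reading off the $\mathcal L_i$ and checking the isomorphism is routine.
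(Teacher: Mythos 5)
Your overall strategy is the same as the paper's: show that $\varphi$ is trivial over every affine toric chart of $Y$, and then read the twisting line bundles $\mathcal L_i$ off the combinatorial data of the lifted cones (your vectors $m_{\bar\rho}$ are exactly the values on ray generators of the $\Sigma'$-linear support functions $h_i$ that the paper uses to define the $\mathcal L_i$). The preparatory fan analysis --- the fibre fan is the standard fan of $\mathbb P^r$ in $N'_{\mathbb R}$, and every ray of $\Sigma$ either lies in $N'_{\mathbb R}$ or is the unique ray over a ray of the fan of $Y$ --- is fine. The genuine gap is in the step you yourself single out as the crux. You assert that since $\varphi^{-1}(U_\tau)\to U_\tau$ is a $\mathbb P^r$-bundle and $\operatorname{Pic}(U_\tau)=0$, the maximal cones over $\tau$ are forced to have the product form. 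But vanishing of $\operatorname{Pic}(U_\tau)$ only trivializes line bundles; it does not trivialize a $\mathbb P^r$-bundle, i.e.\ a rank $r+1$ vector bundle up to twist, over an affine toric variety (for arbitrary vector bundles this is Gubeladze's theorem, which is far from formal). This is exactly where the paper does real work: it replaces $\mathcal E$ by $\varphi_*\mathcal O_X(H)$ for a torus invariant Cartier divisor $H$ with $\mathcal O_X(H)\simeq \mathcal O_{\mathbb P_Y(\mathcal E)}(1)$, so that $\mathcal E$ carries a canonical equivariant structure, and then invokes the equivariant splitting of toric vector bundles over affine toric varieties (\cite[Proposition 2.2]{payne-m}) to obtain $\mathcal E|_U\simeq \mathcal O^{\oplus r+1}_U$, hence the triviality of $\varphi$ over $U$ and the product shape of the cones. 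Without this ingredient, or an honest combinatorial substitute, your description of the maximal cones --- and with it the definition of the $\mathcal L_i$ and the comparison of fans --- is unsupported.

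The alternative ``more geometric'' organisation at the end has a parallel gap: pairwise disjointness of the $r+1$ sections $C_i$ only says that the corresponding points of each fibre $\mathbb P(\mathcal E_y)$ are pairwise distinct, which does not imply that the associated hyperplanes of $\mathcal E_y$ intersect in $0$ (three pairwise distinct lines in a plane inside $\mathbb C^3$ do not span). One could try to argue that on each fibre the $r+1$ hyperplanes $D_{u_j}\cap X_y$ are in general position because their total intersection is empty while any $r$ of them meet in the single point $C_i\cap X_y$, but this presupposes that each $C_i\to Y$ really is a section and that each $D_{u_j}$ restricts to a hyperplane on every fibre, including the fibres over torus invariant strata of $Y$; a priori the degeneracy locus of $\mathcal E\to\bigoplus_i\mathcal L_i$ is only a torus invariant divisor of $Y$ disjoint from the open orbit, not automatically empty. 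So this route also requires the local analysis that the first route skipped.
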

\begin{proof}
Since $X$ is a $\mathbb P^r$-bundle over $Y$, we can write 
$X=\mathbb P_Y(\mathcal E)$ for some vector bundle $\mathcal E$ on $Y$. 
We take a torus invariant Cartier divisor $H$ on $X$ 
such that $\mathcal O_X(H)\simeq \mathcal O_{\mathbb P_Y(\mathcal E)}(1)$. 
Then we have $\varphi_*\mathcal O_X(H)\simeq \mathcal E$. 
Thus, by replacing $\mathcal E$ with $\varphi_*\mathcal O_X(H)$, 
we may assume that $\mathcal E$ is a toric vector bundle on $Y$, 
that is, the torus action on $Y$ lifts to an action on $\mathcal E$ 
and it is linear on the fibers. Let $U$ be any affine 
toric open subset $U$ of $Y$. 
Then it is not difficult to see that $\mathcal E|_U$ is 
isomorphic to $\mathcal O^{\oplus r+1}_U$ as a 
toric vector bundle on $U$ 
(see, for example, \cite[Proposition 2.2]{payne-m}). 
Therefore, the restriction of $\varphi\colon X\to Y$ 
to $U$ is isomorphic to the second projection $\mathbb P^r\times U\to U$. 
Let $h\colon (N, \Sigma)\to (N', \Sigma')$ be a map of 
fans corresponding to $\varphi\colon X\to Y$. 
Let $N''$ be the kernel of $h\colon N\to N'$. 
Without loss of generality, we may assume that 
$N=N''\oplus N'$. 
We fix a $\mathbb Z$-basis $\{n''_1, \ldots, n''_r\}$ of $N''$. 
Since $\varphi\colon X\to Y$ is isomorphic to 
the second projection $\mathbb P^r \times U\to U$ for any 
affine toric open subset $U$ of $Y$, 
we can lift any cone $\sigma'\in \Sigma'$ to a cone 
$\sigma\in \Sigma$. 
Hence we can find $\Sigma'$-linear support functions 
$h_1, \ldots, h_r$ such that 
the map $N'_{\mathbb R}\to N_{\mathbb R}=N''_{\mathbb R}\oplus 
N'_{\mathbb R}$ given by $y\mapsto \left(\sum _{i=1}^rh_i(y)n''_i, y\right)$ 
defines the desired lifts of cones.  
Let $\mathcal L_i$ be the line bundle on $Y$ defined 
by the $\Sigma'$-linear 
support function $h_i$ for every $i$. 
Then, by construction, we can check that 
$X\simeq \mathbb P_Y(\mathcal O_Y\oplus \mathcal L_1\oplus 
\cdots \oplus \mathcal L_r)$ and 
$\varphi\colon X\to Y$ is isomorphic to 
the projection $\pi\colon \mathbb P_Y(\mathcal O_Y\oplus 
\mathcal L_1\oplus \cdots \oplus \mathcal L_r)\to Y$ 
(see Remark \ref{a-rem6.2} below). 
We finish the proof. 
\end{proof}

\begin{rem}[{see \cite[p.124, Remark.(2)]{park}}]\label{a-rem6.2}
The minus sign in \cite[p.59]{oda2} needs to be deleted. 
\end{rem}

\end{document}